\documentclass[12pt,a4paper,reqno]{amsart}
\usepackage{amsmath,amsfonts,amssymb,amsthm,amsbsy,euscript}
\usepackage{mathtools} 
\usepackage[small,nohug,heads=LaTeX]{diagrams}
\diagramstyle[labelstyle=\scriptstyle]




\setlength{\headheight}{32pt}
\setlength{\headsep}{29pt}
\setlength{\footskip}{28pt}
\setlength{\textwidth}{444pt}
\setlength{\textheight}{636pt}
\setlength{\marginparsep}{7pt}
\setlength{\marginparpush}{7pt}
\setlength{\oddsidemargin}{4.5pt}
\setlength{\marginparwidth}{55pt}
\setlength{\evensidemargin}{4.5pt}
\setlength{\topmargin}{-15pt}
\setlength{\footnotesep}{8.4pt}

\def\oldvec{\mathaccent "017E\relax }

\newcommand{\by}[1]{\textit{{#1}}}
\newcommand{\jour}[1]{\textit{{#1}}}
\newcommand{\vol}[1]{\textbf{{#1}}}
\newcommand{\book}[1]{\textrm{{#1}}}

\title[The orientation morphism]{The orientation morphism: from graph cocycles to deformations of Poisson structures}

\author[R.~Buring]{R.~Buring${}^{\ddagger}$}
\thanks{${}^{\ddagger}$\textit{Address}: 
Institut f\"ur Mathematik, 
Johannes Gutenberg\/--\/Uni\-ver\-si\-t\"at,
Staudingerweg~9, 
\mbox{D-\/55128} Mainz, Germany.
\quad 
{E-mail}: \texttt{rburing\symbol{"40}uni-mainz.de}.}

\author[A.~V.~Kiselev]{A.\,V.\,Kiselev${}^{\S}$}
\thanks{${}^{\S}$\textit{Address}: 
Bernoulli Institute for Mathematics, 
Computer Science \& 
Artificial Intelligence, University of Groningen, P.O.~Box~407, 9700~AK Groningen, The~Netherlands. 
{E-mail}: \texttt{A.V.Kiselev\symbol{"40}rug.nl}.
}

\date{2 December 2018}

\subjclass[2010]{
05C22, 
68R10, 
16E45, 
53D17, 
81R60. 
}


\newtheorem{thm}{Theorem}

\newtheorem{prop}{Proposition}
\newtheorem{cor}{Corollary}
\theoremstyle{remark}
\newtheorem{rem}{Remark}
\newtheorem{example}{Example}
\theoremstyle{definition}
\newtheorem*{notation}{Notation}
\newtheorem{define}{Definition}

\newcommand{\schouten}[1]{[\![#1]\!]}
\newcommand{\NR}{\text{\textup{NR}}}
\newcommand{\Or}{{\rm O\oldvec{r}}}
\newcommand{\littleOr}{{\rm \oldvec{or}}}
\newcommand{\cP}{\mathcal{P}}

\newcommand{\cQ}{\mathcal{Q}}
\newcommand{\cX}{{\EuScript X}}    

\newcommand{\stick}{\bullet\!\!\!-\!\!\!\bullet}
\newcommand{\decoratedwedge}{\xleftarrow[\text{\ Left}]{i}\!\! \bullet\!\! \xrightarrow[\text{Right}]{j}}
\DeclareMathOperator{\Id}{d}

\DeclareMathOperator{\End}{End}
\newcommand{\Tpoly}{T_{\textrm{\normalfont \textup{poly}}}}
\newcommand{\insertion}[2]{#1\mathbin{ \vec{\circ} }#2}
\newcommand{\Gra}[1]{\big(\!\operatorname{Gra}_{\text{\textup{\#Vert}}\eqqcolon #1 \geqslant 1}^{\bigwedge_i \text{\textup{edge}}_i}\!\big)_{S_#1}}

\newcommand{\cev}[1]{\reflectbox{\ensuremath{\vec{\reflectbox{\ensuremath{#1}}}}}}

\begin{document}

\begin{abstract}
\noindent
We recall the construction of the Kontsevich graph orientation morphism $\gamma \mapsto \Or(\gamma)$ which maps cocycles $\gamma$ in the non-oriented graph complex to infinitesimal symmetries $\dot{\cP} = \Or(\gamma)(\cP)$ of Poisson bi-vectors on affine manifolds.
We reveal in particular why there always exists a factorization of the Poisson cocycle condition $\schouten{\cP,\Or(\gamma)(\cP)} \doteq 0$ through the differential consequences of the Jacobi identity $\schouten{\cP,\cP}=0$ for Poisson bi-vectors $\cP$.
To illustrate the reasoning, we use the Kontsevich tetrahedral flow $\dot{\cP} = \Or(\boldsymbol{\gamma}_3)(\cP)$, as well as the flow produced from the Kontsevich--Willwacher pentagon-wheel cocycle $\boldsymbol{\gamma}_5$ and the new flow obtained from the heptagon-wheel cocycle~$\boldsymbol{\gamma}_7$ in the unoriented graph complex.
\end{abstract}

\maketitle

\enlargethispage{0.3\baselineskip}
\noindent
\textbf{Introduction.}\ 
On an affine manifold $
M^r 
$, the Poisson bi-vector fields are those satisfying the Jacobi identity $\schouten{\cP,\cP} = 0$, where $\schouten{\cdot,\cdot}$ is the Schouten bracket (\cite{LaurentGengouxPichereauVanhaecke}, see also Example~\ref{ExSchouten} below).
A deformation $\cP \mapsto \cP + \varepsilon \cQ + \bar{o}(\varepsilon)$ of a Poisson bi-vector $\cP$ preserves the Jacobi identity infinitesimally if $\schouten{\cP, \cQ} = 0$.
If, by assumption, the deformation term $\cQ$ (itself not necessarily Poisson) depends on the bi-vector $\cP$, then the equation $\schouten{\cP, \cQ(\cP)} \doteq 0$ must be satisfied by force of $\schouten{\cP,\cP} = 0$.
In \cite{Ascona96} 
Kontsevich designed a way to produce infinitesimal deformations $\dot{\cP} = \cQ(\cP)$ which are \emph{universal} with respect to all Poisson structures on all affine manifolds: for a given bi-vector $\cP$, the coefficients of bi-vector $\cQ(\cP)$ are differential polynomial in the coefficients of $\cP$.

The original construction from \cite{Ascona96} goes 
in three steps, as follows.
First, recall that the vector space $\Gra{n}$ of unoriented finite graphs with unlabelled vertices and wedge ordering on the set of edges carries the structure of a complex with respect to the vertex-expanding differential~$\Id$.
In fact, this space is a differential graded Lie algebra such that the differential $\Id$ is the Lie bracket with a single edge, $\Id = [\stick, \cdot]$.
Let $\gamma = \sum_i c^i \gamma_i$ be a sum of graphs with $n$ vertices and $2n-2$ edges, satisfying $\Id(\gamma) = 0$.
Then let us sum --\,with signs, which will be discussed in~\S\ref{SecOr} below\,-- over all possible ways to orient the graphs $\gamma_i$ in the cocycle $\gamma$ such that each vertex is the arrowtail for two outgoing edges; 
create 
two extra edges going to two new vertices, the sinks.
Secondly, skew-symmetrize (w.r.t.\ 
the sinks) the resulting sum of Kontsevich oriented graphs.
Finally, insert a Poisson bi-vector $\cP$ into each vertex of every $\gamma_i$ in the sum of Kontsevich graphs at hand.
Now, every oriented graph built of the decorated wedges $\decoratedwedge$ determines a differential-polynomial expression in the coefficients $\cP^{ij}(x^1$,\ $\ldots$,\ $x^r)$ of a bivector $\cP$ whenever the arrows $\xrightarrow{a}$ denote derivatives $\partial/\partial x^a$ in a local coordinate chart
, each vertex $\bullet$ at the top of a wedge contains a copy of $\cP$, and one takes the product of vertex contents and sums up over all the indexes.
The right-hand side of the symmetry flow $\dot{\cP} = \cQ(\cP)$ is obtained!

We give an explicit, relatively elementary proof that this recipe 
does the job
, i.e. why the Poisson cocycle condition $\schouten{\cP, \cQ(\cP)} \doteq 0$ is satisfied for 
every Poisson structure $\cP$, and for every $\cQ = \Or(\gamma)$ obtained from a graph cocycle $\gamma \in \ker \Id$ in this way.
The 
reasoning is based on that given by Jost \cite{Jost2013}, which in turn follows an outline by Willwacher \cite{WillwacherGRT}, itself referring to the seminal paper \cite{Ascona96} by Kontsevich.

At the same time, the present text 
concludes a series of papers \cite{tetra16,f16,sqs17} with an empiric search for 
the factorizations $\schouten{\cP, \cQ(\cP)} 
= \Diamond\bigl(\cP,\schouten{\cP,\cP}\bigr)$ using the Jacobiator $\schouten{\cP,\cP} 
$, as well as containing an independent verification of the numerous rules of signs for 
many graded objects under study --- the ultimate 
aim being to 
understand 
the 
morphism~$\Or$.

Section~\ref{SecOr} 
establishes the formula\footnote{The existence of this formula with \emph{some} vanishing right-hand
side is implied in~\cite{Ascona96, WillwacherGRT,DolgushevRogersWillwacher} where it is stated that there is an action of the graph complex on Poisson structures (or Maurer--Cartan elements of $\Tpoly(M)$).
The precise right-hand side is all but written in~\cite{Jost2013}; still to the best of our knowledge, the exact formula is presented 
   here and
on p.~\pageref{EqDiamond} below
for the first time. --- The same applies to Jacobi identity~\eqref{EqJacNR} for the Lie bracket \emph{of graphs} (cf.~\cite{Identities32}).%
} of Poisson cocycle factorization through the Jacobiator $\schouten{\cP,\cP} 
$:
\begin{multline}\label{EqOrCocycle}
2\cdot \schouten{\cP, \Or(\gamma)(\cP, \ldots, \cP)} 
= \Or(\gamma)(\schouten{\cP,\cP}, \cP, \ldots,\cP) + \ldots + {} \\
+ \Or(\gamma)(\cP,\ldots,\cP,\schouten{\cP,\cP},\cP,\ldots,\cP) + \ldots + 
\Or(\gamma)(\cP,\ldots,\cP,\schouten{\cP,\cP}), 
\end{multline}
where the r.-h.s.\ 
consists of oriented graphs with one copy of the tri-vector $\schouten{\cP,\cP}$ inserted consecutively into a 
vertex of the graph(s)~$\gamma$.

We illustrate the work of 
orientation morphism $\Or$ 
which maps $\ker\,\Id \ni \gamma \mapsto \cQ(\cP) \in \ker \schouten{\cP, \cdot}$ 
by using four 
examples, which include in particular the first 
elements $\boldsymbol{\gamma}_3$,\ $\boldsymbol{\gamma}_5$,\ $\boldsymbol{\gamma}_7 \in \ker \Id$ of nontrivial graph cocycles found by Willwacher in~\cite{WillwacherGRT}: the Kontsevich tetrahedral flow $\dot{\cP} = \Or(\boldsymbol{\gamma}_3)(\cP)$ (see~\cite{Ascona96} and~\cite{tetra16, f16}), the Kontsevich--Willwacher pentagon wheel cocycle $\boldsymbol{\gamma}_5$ and the respective flow $\dot{\cP} = \Or(\boldsymbol{\gamma}_5)(\cP)$ (here, see \cite{JNMP17} and \cite{WillwacherGRT}), and similarly, the heptagon-wheel cocycle $\boldsymbol{\gamma}_7$ and its flow.
In each case, 
the reasoning 
reveals a factorization $\schouten{\cP, \Or(\gamma)(\cP)} = \Diamond(\cP, \schouten{\cP,\cP})$ through the Jacobi identity $\schouten{\cP,\cP} = 0$.
For the tetrahedral flow $\dot{\cP} = \Or(\boldsymbol{\gamma}_3)(\cP)$ we thus
recover the factorization of $\schouten{\cP, \dot{\cP}}$ -- in terms of the ``Leibniz'' graphs with the tri-vector $\schouten{\cP,\cP}$ inside -- which had been obtained in \cite{f16} by a brute force calculation.
Let it be noted that such factorizations, $\schouten{\cP, \dot{\cP}} = \Diamond(\cP, \schouten{\cP,\cP})$, are known to be non-unique for a given flow $\dot{\cP}$; the scheme which we presently consider provides 
one such operator $\Diamond$ (out of many, possibly).

Trivial graph cocycles, i.e.\ $\Id$-coboundaries $\gamma = \Id(\beta)$ also serve as an illustration.
Under the orientation mapping $\Or$ their ``potentials'' $\beta$ (sums of graphs with $n-1$ vertices and $2n-3$ edges) are transformed into the vector fields~$\cX$, also codified 
by the Kontsevich oriented graphs, which trivialize the respective flows $\dot{\cP} = \Or(\gamma)(\cP)$ in the space of bi-vectors: namely, $\Or(\Id(\beta))(\cP) = \schouten{\cP, \Or(\beta)(\cP)}$ so that the resulting flow $\dot{\cP} = \cQ(\cP) = \schouten{\cP, \cX(\cP)}$ is trivial in Poisson cohomology. 
We offer 
an example on p.~\pageref{ExBeta6}: here, $\cX(\cP) = 2\Or(\boldsymbol{\beta}_6)(\cP)$. 

This paper continues in~\S\ref{SecExamples} with some statistics about the number of graphs 
(\textit{i})  in the ``known'' cocycles $\gamma = \sum c^i \gamma_i \in \ker \Id$, 
(\textit{ii})  in the respective flows $\cQ = \Or(\gamma)$ which consist of the oriented Kontsevich graphs, 
(\textit{iii})  in the factorizing operators $\Diamond$ (provided by the proof) which are encoded by the Leibniz  graphs (see \cite{sqs15, f16}), and 
(\textit{iv}) in the cocycle equations $\schouten{\cP, \Or(\gamma)(\cP)} \doteq 0$.
We see that for thousands and millions of oriented graphs in the left- and right-hand sides of \eqref{EqOrCocycle} the coefficients match perfectly.


\section{The parallel worlds of graphs and endomorphisms}
\label{SecGraphs}
\noindent%
The universal deformations $\dot{\cP} = \cQ(\cP)$ which we consider will be given by certain endomorphisms evaluated at copies of a given Poisson structure $\cP$.
In particular, the resulting expressions will be differential polynomials in the coefficients of $\cP$.
Moreover, such expressions will be built using graphs, so that properties of objects in the graph complex are translated into properties of the objects realized by the graphs in the Poisson complex.
To this end, let us recall and compare the notions of operads of non-oriented graphs and of endomorphisms of multi
-vector fields on affine manifolds.
This material is standard; we follow~\cite{Ascona96, Jost2013, WillwacherGRT, MerkulovWillwacherGRTBV}.

\subsection{Endomorphisms $\End(\Tpoly(
M)[1])$ (e.g., the Schouten bracket~$\schouten{\cdot,\cdot}$)}\label{SecEndo}
\noindent%
Denote the shifted-graded vector space of all multi-vector fields on the manifold~$
M^r$ 
by\footnote{This notation for the space of multi-vectors should not be confused with a similar notation for the space of vector fields with polynomial coefficients on an affine manifold $
M^r$. Nor should it be read as the space of multi-vectors on a super-manifold.} 
\[ \Tpoly(
M)[1] = \bigoplus_{\bar{\ell} \geqslant -1} \Tpoly^{\ell}(
M) \qquad \text{ where } \quad \ell = \bar{\ell} + 1. \]
The grading in $\Tpoly(
M)[1] = T^{\downarrow[1]}_{\text{\textup{poly}}}(M)$
is shifted \emph{down} so 
that, by definition, a bi-vector~$\cP$ has degree $|\cP|=2$ but $\bar{\cP}=1$,~etc.
We let the multi\/-\/vectors be 
encoded in a standard way using a local coordinate chart $x^1$,\ $\ldots$,\ $x^r$ on $
M^r$ and the respective parity\/-\/odd variables $\xi_1$,\ $\ldots$,\ $\xi_r$ along the reverse\/-\/parity fibres of $\Pi T^*
M^r$ over that chart.
For example, a bi-vector is written in coordinates as $\cP=\sum_{1\leqslant i<j\leqslant r} P^{ij}(\boldsymbol{x}) \xi_i \xi_j$.\footnote{Our notation is such that the wedge product of multi-vectors does not include any constant factor.}

An endomorphism of $\Tpoly(
M)[1]$ of arity $k$ and degree~$\bar{d}$ is a $k$-linear (over the field~$\mathbb{R}$) map $\theta\colon \Tpoly(
M)[1] \otimes \ldots \otimes \Tpoly(
M)[1] \to \Tpoly(
M)[1]$, not necessarily (graded-)\/skew in its $k$ arguments, and such that for grading-homogeneous arguments we have that 
\[\theta\colon \Tpoly^{\bar{d}_1}(
M) \otimes \ldots \otimes \Tpoly^{\bar{d}_k}(
M) \to \Tpoly^{\bar{d}_1+\ldots+\bar{d}_k+\bar{d}}(
M),\]
i.e.\ $\theta$ restricts to a map of degree~$\bar{d}$. 

\begin{example}
\label{ExSchouten}
The Schouten bracket $\schouten{\cdot, \cdot} \colon \Tpoly^{\bar{d}_1}(
M) \otimes \Tpoly^{\bar{d}_2}(
M) \to \Tpoly^{\bar{d}_1+\bar{d}_2}(
M)$ has arity $2$ and shifted degree $\overline{\deg}\,(\schouten{\cdot,\cdot})=0$
(note $\bigl| \schouten{\cdot,\cdot} \bigr|=-1$).
It is expressed 
in coordinates by the formula
\[ 
\schouten{\cP,\cQ} = \sum_{\ell=1}^{r} (\cP) \frac{\cev{\partial}}{\partial\xi_\ell} \cdot
\frac{\vec{\partial}}{\partial x^\ell}(\cQ) -
(\cP) \frac{\cev{\partial}}{\partial x^\ell}  \cdot \frac{\vec{\partial}}{\partial \xi_\ell}(\cQ).
\]
\end{example}

\begin{notation}
The bi-graded vector space of 
endomorphisms under study is denoted by 
\[\End\bigl(\Tpoly(
M)[1]\bigr) = \bigoplus_{\substack{\bar{d}\in\mathbb{Z}\\k \geqslant 1}} 
\End^{k,\bar{d}}\bigl(\Tpoly(
M)[1]\bigr).\]
This space has the structure of an operad (with an action by the permutation group~$S_k$ on the part of arity~$k$): indeed, endomorphisms can be inserted one into another.

Let $\theta_a$ and $\theta_b$ be two endomorphisms of respective arities~$k_a$ and~$k_b$.
The insertion of $\theta_a$ into the $i^{\text{th}}$ argument of $\theta_b$ is denoted by $\theta_a \mathbin{\vec{\circ}_i} \theta_b$.
For instance, $(\theta_a \mathbin{\vec{\circ}_1} \theta_b)(p_1,\ldots,p_{k_a+k_b-1}) = \theta_b(\theta_a(p_1$,\ $\ldots$,\ $p_{k_a})$,\ $p_{k_a+1}$,\ $\ldots$,\ $p_{k_a+k_b-1})$.
Likewise, the notation $\theta_a \mathbin{\cev{\circ}_i} \theta_b$ means the insertion of the succeeding object $\theta_b$ into the preceding $\theta_a$, whence $(\theta_a \mathbin{\cev{\circ}_1} \theta_b)(\boldsymbol{p}) = \theta_a(\theta_b(p_1$,\ $\ldots$,\ $p_{k_b})$,\ $p_{k_b + 1}$,\ $\ldots$,\ $p_{k_a + k_b - 1})$.
Without an arrow pointing left, this notation~$\cev{\circ}_i$ is used in other papers; it is also natural because the graded objects $\theta_a$ and $\theta_b$ are not swapped.
\end{notation}

\begin{define}
The \emph{insertion} $\vec{\circ}$ of an endomorphism $\theta_a$ into an endomorphism $\theta_b$ of arity $k_b$ is the sum of insertions: $\insertion{\theta_a}{\theta_b} = \sum_{i=1}^{k_b} \theta_a \mathbin{\vec{\circ}_i} \theta_b$.
The graded commutator of endomorphisms of degrees $d_a$ and $d_b$ is $[\theta_a, \theta_b] = \insertion{\theta_a }{\theta_b} - (-)^{
|\theta_a|\cdot|\theta_b| } \insertion{\theta_b}{\theta_a}$.
An endomorphism $\theta$ of arity~$k$ is \emph{skew} with respect to permutations of its graded arguments if it acquires the Koszul sign, $\theta (p_1,\ldots,p_k) = \epsilon_{\boldsymbol{p}}(\sigma)\theta(p_{\sigma(1)},\ldots,p_{\sigma(k)})$ under $\sigma \in S_k$. 
Here $\epsilon_{\boldsymbol{p}}((1\ 2)) = (-)^{(1\ 2)} (-)^{
\bar{p}_1\cdot\bar{p}_2 }$ and similarly for all other transpositions which generate the permutation group~$S_k$.
Suppose that both of the endomorphisms $\theta_a$ and $\theta_b$ from the above are graded skew-symmetric.
The \emph{Nijenhuis--Richardson bracket} $[\theta_a,\theta_b]_{\NR}$ of those skew endomorphisms (of degrees $d_a$ and $d_b$ respectively) is the skew-symmetrization of $[\theta_a, \theta_b]$ with respect to the permutations, graded by the Koszul signs.
\end{define}

\begin{example}
The shifted-graded skew-symmetric Schouten bracket \[ \pi_S(p_1,p_2) \coloneqq (-)^{|{p_1}|-1}\schouten{p_1, p_2} \in \End^2(\Tpoly(
M)[1]) \]
of multivectors $a,b,c$ of respective homogeneities satisfies the shifted-graded Jacobi identity \[\schouten{a,\schouten{b,c}} - (-)^{\bar{a}\bar{b}}\schouten{b,\schouten{a,c}} = \schouten{\schouten{a,b},c} = 0,\] 
or equivalently, \[ \schouten{a,\schouten{b,c}} + (-)^{\bar{a}\bar{b}+\bar{a}\bar{c}}\schouten{b,\schouten{c,a}} + (-)^{\bar{c}\bar{a} + \bar{c}\bar{b}}\schouten{c,\schouten{a,b}} = 0. \]
Taken four times, $[\pi_S, \pi_S]_{\NR}$ evaluated (with Koszul signs shifted by $\deg \schouten{\cdot,\cdot} = -1$) at $a$,\ $b$,\ $c$ yields the l.-h.s.\ 
of the Jacobi identity for $\schouten{\cdot, \cdot}$.
This shows that $[\pi_S,\pi_S]_{\NR} = 0$.
\end{example}

\begin{prop}
The Nijenhuis--Richardson bracket \textup{(}of homogeneous arguments of respective degrees\textup{)} itself satisfies the graded Jacobi identity 
\begin{equation}\label{EqJacNR} 
[a,[b,c]_{\NR}]_{\NR} - (-)^{|a|\cdot|b|}[b,[a,c]_{\NR}]_{\NR} = [[a,b]_{\NR},c]_{\NR}, 
\end{equation}
or equivalently, \[ [a,[b,c]_{\NR}]_{\NR} + (-)^{|a|\cdot|b| + |a|\cdot|c|}[b,[c,a]_{\NR}]_{\NR} + (-)^{|c|\cdot|a|+|c|\cdot|b|}[c,[a,b]_{\NR}]_{\NR} = 0. \]
\end{prop}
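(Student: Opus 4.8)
\
The plan is to deduce the graded Jacobi identity~\eqref{EqJacNR} from the elementary \emph{pre-Lie} structure carried by the insertion $\vec{\circ}$ on the full operad $\End\bigl(\Tpoly(M)[1]\bigr)$, and only then to pass to the graded-skew endomorphisms on which $[\cdot,\cdot]_{\NR}$ lives.

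First I would show that $\vec{\circ}=\sum_i\vec{\circ}_i$ is a graded \emph{right} pre-Lie product, i.e.\ that its associator
\[
\mathrm{As}(a,b,c) := \insertion{(\insertion{a}{b})}{c} - \insertion{a}{(\insertion{b}{c})}
\]
is graded-symmetric under the transposition of its last two arguments, $\mathrm{As}(a,b,c)=(-)^{|b|\cdot|c|}\mathrm{As}(a,c,b)$. Expanding $\insertion{a}{b}=\sum_i a\mathbin{\vec{\circ}_i}b$ and invoking the two operad axioms --- sequential composition, when the slot of the outer insertion lies in the range produced by the inner one, versus parallel composition into distinct slots --- the ``nested'' contributions, where the copy of $c$ is inserted inside the copy of $b$, cancel between the two parenthesizations, and what survives is $b$ and $c$ inserted into two \emph{distinct} arguments of $a$, which is visibly symmetric under $b\leftrightarrow c$ up to the Koszul sign $(-)^{|b|\cdot|c|}$. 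Since the shift $\bar{d}=|\theta|-1$ makes these signs nontrivial, I would record the two operad axioms, with their correct Koszul signs, as a preliminary lemma (fixing once and for all the convention for $\theta_a\mathbin{\vec{\circ}_i}\theta_b$ evaluated on a tuple $(p_1,\dots,p_{k_a+k_b-1})$) before assembling the pre-Lie identity.

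Given the pre-Lie identity, it is a purely formal matter --- which I would either cite or reproduce in a few lines --- that the graded commutator $[a,b]=\insertion{a}{b}-(-)^{|a|\cdot|b|}\insertion{b}{a}$ satisfies the graded Jacobi identity: substituting the pre-Lie products into the Jacobiator and its two cyclic companions produces exactly the associators $\mathrm{As}(a,b,c)$, $\mathrm{As}(b,c,a)$, $\mathrm{As}(c,a,b)$ together with their $b\leftrightarrow c$-type transposes and matching Koszul signs, which cancel in pairs by the symmetry from the previous step. Finally, because $[\cdot,\cdot]_{\NR}$ is by definition the Koszul-graded skew-symmetrization of $[\cdot,\cdot]$ restricted to graded-skew endomorphisms, and because on such arguments the skew-symmetrization reorganizes the insertions $\vec{\circ}_i$ into shuffle sums that remain compatible with the pre-Lie relation, skew-symmetrizing the already-established Jacobi identity for $[\cdot,\cdot]$ among graded-skew arguments $a$,\ $b$,\ $c$ yields precisely~\eqref{EqJacNR}, the signs $(-)^{|a|\cdot|b|}$ being exactly the Koszul signs of the shifted-degree arguments. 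Conceptually, this last step is the statement that the graded-skew endomorphisms of $\Tpoly(M)[1]$ are the coderivations of a cofree graded coalgebra built on a shift of $\Tpoly(M)[1]$, and that the Nijenhuis--Richardson bracket is their graded commutator, which is automatically again a coderivation; the Jacobi identity is then inherited from that of the endomorphism commutator.

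The main obstacle throughout is sign bookkeeping, and it is concentrated in the first step: two sign systems interact --- the Koszul signs coming from the shift $\bar{d}=|\theta|-1$ on $\Tpoly(M)[1]$, and the signs generated when one endomorphism is carried past the arguments on which a neighbouring endomorphism acts during an insertion $\vec{\circ}_i$. Once the pre-Lie identity is verified with the correct signs (and once the compatibility of shuffle-skew-symmetrization with that identity is checked), every subsequent step is formal, so I would invest the care in proving the sequential and parallel operad axioms as explicit lemmas and deriving the graded right pre-Lie relation from them.
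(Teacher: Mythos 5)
The paper itself offers no proof of this Proposition: it is stated as standard material, with the references (Jost, Willwacher, Kontsevich) and the footnote doing the work. So there is no in-paper argument to mirror; your pre-Lie route is the standard one and is essentially sound, but two points deserve attention. First, the handedness of your pre-Lie claim is reversed relative to the paper's convention: since $\insertion{\theta_a}{\theta_b}=\sum_i\theta_a\mathbin{\vec{\circ}_i}\theta_b$ inserts $\theta_a$ \emph{into} $\theta_b$ (so $\theta_b$ is the outer map), the nested terms that cancel in $\insertion{(\insertion{a}{b})}{c}-\insertion{a}{(\insertion{b}{c})}$ are those with $a$ inside $b$ inside $c$, and what survives is $a$ and $b$ inserted into \emph{distinct} slots of $c$; hence the associator is graded-symmetric in its \emph{first} two arguments, $\mathrm{As}(a,b,c)=(-)^{|a|\cdot|b|}\mathrm{As}(b,a,c)$ (a left pre-Lie identity), not in the last two as you wrote. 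This is a fixable convention slip -- either statement yields the graded Jacobi identity for the commutator -- but with the paper's $\vec{\circ}$ your stated symmetry and your description of the cancelling terms are wrong as written, so the preliminary lemma you promise must be set up with the correct orientation.

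Second, the passage from the Jacobi identity for $[\cdot,\cdot]$ on all endomorphisms to identity~\eqref{EqJacNR} for $[\cdot,\cdot]_{\NR}$ is not literally ``skew-symmetrize the identity you already have'': a nested bracket $[a,[b,c]_{\NR}]_{\NR}$ involves skew-symmetrization both before and after the outer insertion, so one needs the compatibility statement that, for graded-skew $a$ and an arbitrary multilinear $m$, alternation commutes with bracketing in the sense $\mathrm{Alt}\bigl([a,\mathrm{Alt}(m)]\bigr)=\mathrm{Alt}\bigl([a,m]\bigr)$ (equivalently, that on skew arguments the NR bracket is computed by sums over unshuffles). You flag this as ``shuffle compatibility'' but do not prove it; it is the genuine content of the last step. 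The cleanest repair is the identification you mention only as a conceptual gloss: graded-skew $k$-linear maps correspond to coderivations of the cofree cocommutative coalgebra on the (shifted) space $\Tpoly(M)[1]$, the Nijenhuis--Richardson bracket corresponds to the graded commutator of coderivations, and the Jacobi identity is then automatic. Promote that remark to the actual argument (or prove the alternation lemma directly), and the outline becomes a complete proof.
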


\begin{cor}
The map 
$\partial \coloneqq [\pi_S, \cdot]_{\NR}$ is a differential on the space of skew endo\-mor\-ph\-isms.
\end{cor}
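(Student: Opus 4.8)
The plan is to verify that $\partial = [\pi_S, \cdot]_{\NR}$ is an $\mathbb{R}$-linear graded self-map of the space of skew endomorphisms that squares to zero. Linearity is immediate, and since the Nijenhuis--Richardson bracket of two graded skew-symmetric endomorphisms is --- by its very definition as the Koszul-signed skew-symmetrization of the graded commutator --- again graded skew-symmetric, the operator $\partial$ does send skew endomorphisms to skew endomorphisms, shifting the degree by $|\pi_S| = -1$ (equivalently, $\overline{\deg}\,\pi_S = 0$). Hence the only assertion that needs a real argument is $\partial \circ \partial = 0$.

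To obtain $\partial^2 = 0$, I would apply the graded Jacobi identity~\eqref{EqJacNR} for $[\cdot,\cdot]_{\NR}$ to the triple $a = b = \pi_S$ with an arbitrary skew endomorphism $c$. The point to watch is the sign $(-)^{|a|\cdot|b|} = (-)^{|\pi_S|^2}$: since $\pi_S$ has \emph{odd} degree this sign equals $-1$, so the two summands on the left-hand side of~\eqref{EqJacNR} add up instead of cancelling, and that left-hand side becomes $2\,[\pi_S,[\pi_S,c]_{\NR}]_{\NR} = 2\,\partial^2(c)$. The right-hand side of~\eqref{EqJacNR} is $\bigl[[\pi_S,\pi_S]_{\NR},\,c\bigr]_{\NR}$, which vanishes because $[\pi_S,\pi_S]_{\NR} = 0$ --- precisely the identity established in the Example just above, where $[\pi_S,\pi_S]_{\NR}$ was recognized (up to an overall numerical factor) as the left-hand side of the Jacobi identity for the Schouten bracket $\schouten{\cdot,\cdot}$. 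Thus $2\,\partial^2(c) = 0$, and since we work over $\mathbb{R}$ we may divide by $2$ to conclude $\partial^2(c) = 0$ for every skew endomorphism $c$.

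The hard part is not computational but rather the sign bookkeeping just indicated: one must notice that $(-)^{|\pi_S|^2} = -1$, so that $\partial^2 = \tfrac{1}{2}\bigl[[\pi_S,\pi_S]_{\NR},\,\cdot\,\bigr]_{\NR}$ rather than the two terms in~\eqref{EqJacNR} telescoping to zero for free; it is this odd-degree sign that makes the ``integrability'' relation $[\pi_S,\pi_S]_{\NR}=0$ (equivalently, the Jacobi identity for $\schouten{\cdot,\cdot}$) genuinely enter the proof. Nothing else will be needed. I would add as a closing remark that specializing~\eqref{EqJacNR} to $a = \pi_S$ with $b,c$ arbitrary shows $\partial$ to be a graded derivation of $[\cdot,\cdot]_{\NR}$; together with $\partial^2 = 0$ this makes the skew endomorphisms of $\Tpoly(M)[1]$, equipped with $[\cdot,\cdot]_{\NR}$ and $\partial$, into a differential graded Lie algebra --- the ambient structure for the orientation morphism~$\Or$ studied in the sequel.
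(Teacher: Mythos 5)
Your argument is correct and coincides with the proof the paper intends: specialize the graded Jacobi identity~\eqref{EqJacNR} to $a=b=\pi_S$ (where the odd degree $|\pi_S|=-1$ makes the two left-hand terms add up to $2\,\partial^2$) and invoke the master equation $[\pi_S,\pi_S]_{\NR}=0$ established in the preceding Example. The additional remarks on linearity, preservation of skewness, and the dgLa structure are accurate and consistent with the paper's setup.
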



\subsection{Graphs vs endomorphisms}
\label{SecOr}
\noindent%
Having studied the natural differential graded Lie algebra (dgLa) structure on the space of graded skew-symmetric endomorphisms $\End_{\text{skew}}^{*,*}(\Tpoly(
M)[1])$, we observe that its construction goes in parallel with the dgLa structure on the vector space $\bigoplus_k \Gra{k}$ of finite non-oriented graphs with wedge ordering of edges
(and without leaves). 
Referring to \cite{DolgushevRogersWillwacher, Jost2013, Ascona96, WillwacherGRT} (and references therein), as well as to \cite{JNMP17,JPCS17,Identities32} with explicit examples of calculations in the graph complex, 
we summarize 
the set of analogous objects and structures in Table~\ref{TabParallel} below.
\begin{table}[htb]
\caption{From graphs to endomorphisms: the respective objects or structures.}\label{TabParallel}
\vskip 0.5em
\begin{tabular}{p{0.49\textwidth} p{0.49\textwidth}}
{\textbf{World of graphs}} & 
{\textbf{World of endomorphisms}} \\
\hline
Graphs $(\gamma, E(\gamma))$ & Endomorphisms \\
Insertion $\vec{\circ}_i$ of graph into $i^{\text{th}}$ vertex & Insertion of endomorphism into $i^{\text{th}}$ argument \\
Insertion $\insertion{}{}$ of graph into graph & Insertion $\insertion{}{}$\\
Bracket $[a,b] = \insertion{a}{b}-(-)^{|E(a)|\cdot|E(b)|} \insertion{b}{a}$ & Bracket $[a,b] = \insertion{a}{b}-(-)^{|a|\cdot|b|} \insertion{b}{a}$ \\ 
Lie bracket $([a,b], E([a,b]) \coloneqq E(a) \wedge E(b))$ & Nijenhuis-Richardson bracket $[a,b]_{\NR}$ on the space of skew endomorphisms \\ 
The stick $\stick$ & The Schouten bracket $\pi_S = \pm\,\schouten{\cdot, \cdot}$ \\
Master equation $[\stick,\stick] = 0$ & Master equation $[\pi_S, \pi_S]_{\NR} = 0$ \\
Graded Jacobi identity for $[\cdot,\cdot]$ & Graded Jacobi identity for $[\cdot,\cdot]_{\NR}$ \\
Differential $\Id = [\stick,\cdot]$ & Differential $\partial = [\pi_S, \cdot]_{\NR}$
\end{tabular}
\end{table}

The orientation morphism $\Or$, which we presently discuss, provides a transition ``\!\!$\implies$\!\!'' from graphs to endomorphisms.
Our goal is to have a 
Lie algebra morphism
\[
\Bigl\{
\bigoplus_k \Gra{k},\quad
\Id = [\stick, \cdot]
\Bigr\}%
\xrightarrow{\ \ \Or\ \ }%
\Bigl\{
\End_{\text{skew}}^{*,*}(\Tpoly(
M)[1]),\quad 
\partial = [\pi_S, \cdot]_{\NR}
\Bigr\}%
,
\]
hence a dgLa morphism because the differentials $\Id = [\stick,\cdot]$ and $\partial = [\pi_S, \cdot]_{\NR}$ are the adjoint actions of the Maurer--Cartan elements.

In the meantime, we claim without proof that the edge $\stick$ is taken to the Schouten bracket $\pi_S = \pm\,\schouten{\cdot,\cdot}$ by $\Or$: namely, $\stick \mapsto \pi_S=\pm\,\schouten{\cdot,\cdot}$ (see~\eqref{EqSchoutenPi} below).
So, having a Lie algebra morphism implies that $\Or([\stick, \gamma]) = [\pi_S, \Or(\gamma)]_{\NR}$ for a graph $\gamma$ with edge ordering $E(\gamma)$, i.e.\ the following diagram is commutative:
\begin{diagram}
(\gamma,E(\gamma))     &\rMapsto^{\ \ \ \ \ \Or} & \Or(\gamma)\\
\dMapsto_{\Id}        & & \dMapsto_{\partial} \\
[\stick,\gamma]     & \rMapsto^{\!\!\Or} & [\pi_S, \Or(\gamma)]_{\NR}.
\end{diagram}
When this diagram is reached, 
it will be seen -- by evaluating the endomorphisms at copies of $\cP$ --
why the mapping of $\Id$-cocycles in the graph complex to Poisson cocycles $\in \ker\, \schouten{\cP,\cdot}$ is well defined.
This will solve the problem of producing universal infinitesimal symmetries $\dot{\cP} = \Or(\gamma)(\cP)$ of Poisson brackets $\cP$ from $\Id$-cocycles $\gamma \in \ker \Id$.

Let $\gamma$ be an unoriented graph on $k$ vertices and let $p_1$,\ $\ldots$,\ $p_k \in \Tpoly(
M)$ be a $k$-tuple of multivectors.
Not yet at the level of Lie algebras but at the level of two operads with the respective graph- and endomorphism insertions~$\vec{\circ}$, let the linear mapping~%
$\littleOr$ 
be given by the formula~\cite{Ascona96}
\[ \littleOr(\gamma)(p_1,\ldots,p_k) (\boldsymbol{x},\boldsymbol{\xi})
\coloneqq \operatorname{mult}_k \bigg(\prod_{(i,j)\in E(\gamma)} \vec{\Delta}_{ij} (p_1 \otimes \ldots \otimes p_k) \bigg) (\boldsymbol{x},\boldsymbol{\xi}),
\]
where for each edge $(i,j) = e_{ij}$ in the graph $\gamma$, the operator
$\Delta_{ij}\colon e_{ij} \mapsto \smash{\bigl(i \xrightarrow{\ \ell\ } j\bigr) + \bigl(i \xleftarrow{\ \ell\ } j\bigr)}$,
\[ \vec{\Delta}_{ij} = \sum_{\ell=1}^r \bigg(\frac{\vec{\partial}}{\partial x^\ell_{(j)}} \frac{\vec{\partial}}{\partial \xi^{(i)}_\ell} + \frac{\vec{\partial}}{\partial \xi^{(j)}_\ell} \frac{\vec{\partial}}{\partial x^\ell_{(i)}} \bigg),
\]
acts on the $i^{\text{th}}$ and $j^{\text{th}}$ factors 
in the ordered tensor product of arguments $p_1,\ldots,p_k$.
By construction, the right-to-left ordering of the operators $\vec{\Delta}_{ij}$ is inherited 
from the wedge ordering of edges $E(\gamma)$ in the graph $\gamma$: the operator corresponding to the firstmost edge acts first.\footnote{By construction, own grading of the endomorphism $\littleOr(\gamma)$ equals minus the number of edges in~$\gamma$ (because each edge differentiates one~$\xi_\ell$): $|\littleOr(\gamma)|=-|E(\gamma)|$, cf.\ Table~\ref{TabParallel}.}
The operator $\operatorname{mult}_k$, acting at the end of the day, is the ordered multiplication of the resulting terms in $\prod_{(i,j)} \vec{\Delta}_{ij}(p_1\otimes\ldots\otimes p_k)$.

It can be seen (\cite{Jost2013,WillwacherGRT}) that
the graph insertions $\vec{\circ}_i$ are mapped by $\littleOr$ to the insertions $\vec{\circ}_i$ of endomorphisms: 
$\littleOr(\gamma_1 \mathbin{\vec{\circ}_i} \gamma_2) = \littleOr(\gamma_1) \mathbin{\vec{\circ}_i} \littleOr(\gamma_2)$.
Consequently, the sum of insertions $\vec{\circ}$ goes --\,under~$\littleOr$\,-- to the sum of insertions~$\vec{\circ}$.
The mapping $\littleOr$ induces the linear mapping~$\Or$ taking 
graphs to the space of graded-skew endomorphisms $\End_{\text{skew}}(\Tpoly(
M)[1])$. We reach the important equality:
\begin{equation}\label{EqSchoutenPi}
\Or(\stick) = \pi_S, \qquad \text{i.e.}\quad \Or(\stick 
)(p_1,p_2) = (-)^{\bar{p}_1}\schouten{p_1,\ p_2} \quad \text{for } p_1,p_2 \in \Tpoly(
M)[1]. 
\end{equation}
Recall also that both the domain and image of $\Or$, i.e.\ graphs with wedge ordering of edges and their skew-symmetrized images in the space $\End_{\text{skew}}(\Tpoly(
M)[1])$ carry the respective Lie algebra structures.
The conclusion is this:

\begin{prop}
The mapping $\Or\colon \bigoplus_k \Gra{k} \to \End_{\text{\textup{skew}}}^{*,*}(\Tpoly(
M)[1])$ is a Lie algebra morphism\textup{:} $\Or([\gamma,\beta]) = [\Or(\gamma), \Or(\beta)]_{\NR}$.
\end{prop}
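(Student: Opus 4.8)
The plan is to reduce the claimed identity $\Or([\gamma,\beta]) = [\Or(\gamma),\Or(\beta)]_{\NR}$ to two facts already assembled in this section: (i) $\littleOr$ intertwines the operadic insertions, $\littleOr(\gamma_1 \mathbin{\vec{\circ}_i}\gamma_2) = \littleOr(\gamma_1)\mathbin{\vec{\circ}_i}\littleOr(\gamma_2)$, hence (summing over $i$) the total insertions $\vec{\circ}$ are intertwined; and (ii) the passage from $\littleOr$ to $\Or$ is skew-symmetrization of the arguments with Koszul signs, which is exactly the same operation used on the endomorphism side to pass from the plain graded commutator $[\cdot,\cdot]$ to the Nijenhuis--Richardson bracket $[\cdot,\cdot]_{\NR}$. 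So I would first establish the pre-symmetrized identity $\littleOr([\gamma,\beta]) = [\littleOr(\gamma),\littleOr(\beta)]$, where the left bracket is the graph bracket from Table~\ref{TabParallel} and the right bracket is the plain graded commutator of endomorphisms, and then show that applying the (Koszul-signed) skew-symmetrization operator to both sides yields the asserted equality.

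First I would spell out the left-hand side. By definition $[\gamma,\beta] = \insertion{\gamma}{\beta} - (-)^{|E(\gamma)|\cdot|E(\beta)|}\insertion{\beta}{\gamma}$, with $E([\gamma,\beta]) = E(\gamma)\wedge E(\beta)$; applying $\littleOr$ and using linearity together with fact (i) gives $\littleOr([\gamma,\beta]) = \littleOr(\gamma)\mathbin{\vec{\circ}}\littleOr(\beta) - (-)^{|E(\gamma)|\cdot|E(\beta)|}\littleOr(\beta)\mathbin{\vec{\circ}}\littleOr(\gamma)$. Now I invoke the footnote relation $|\littleOr(\gamma)| = -|E(\gamma)|$, so that $|E(\gamma)|\cdot|E(\beta)| \equiv |\littleOr(\gamma)|\cdot|\littleOr(\beta)| \pmod 2$, and the sign matches the one in the defining formula $[\theta_a,\theta_b] = \insertion{\theta_a}{\theta_b} - (-)^{|\theta_a|\cdot|\theta_b|}\insertion{\theta_b}{\theta_a}$. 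This proves $\littleOr([\gamma,\beta]) = [\littleOr(\gamma),\littleOr(\beta)]$ at the operad level. The only subtlety I must be careful with here is the edge-ordering bookkeeping: I need the wedge ordering $E(\gamma)\wedge E(\beta)$ on the composite graph to be the one that the right-to-left ordering of the $\vec{\Delta}_{ij}$-operators reproduces on the endomorphism side, i.e.\ that fact (i) is compatible with how edge orderings concatenate under insertion. I would cite \cite{Jost2013,WillwacherGRT} for the detailed verification and state clearly that the sign conventions in $\vec{\Delta}_{ij}$ and in $\operatorname{mult}_k$ have been fixed precisely so this holds.

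Second, I would pass from $\littleOr$ to $\Or$. Let $\mathrm{Alt}$ denote skew-symmetrization of the $k$ arguments of an arity-$k$ endomorphism, weighted by the Koszul signs $\epsilon_{\boldsymbol p}(\sigma)$ (shifted by the own degree of the endomorphism); by construction $\Or = \mathrm{Alt}\circ\littleOr$ on each arity component, and on the endomorphism side the Nijenhuis--Richardson bracket of two skew endomorphisms is by definition $[\theta_a,\theta_b]_{\NR} = \mathrm{Alt}\,[\theta_a,\theta_b]$. Therefore
\[
\Or([\gamma,\beta]) = \mathrm{Alt}\,\littleOr([\gamma,\beta]) = \mathrm{Alt}\,[\littleOr(\gamma),\littleOr(\beta)] = [\littleOr(\gamma),\littleOr(\beta)]_{\NR}.
\]
It remains to see that $[\littleOr(\gamma),\littleOr(\beta)]_{\NR} = [\Or(\gamma),\Or(\beta)]_{\NR}$, i.e.\ that the NR bracket of the $\mathrm{Alt}$-images equals the $\mathrm{Alt}$-image of the bracket of the originals. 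This is a standard but sign-sensitive lemma: the Nijenhuis--Richardson bracket descends to (and only depends on the arguments through) the skew-symmetrizations of its inputs, because $\mathrm{Alt}$ is idempotent up to the combinatorial normalization and commutes appropriately with the sum of insertions. I expect this last point to be the main obstacle of the proof --- not conceptually, but because it requires checking that the Koszul-sign rule for permuting a block of $k_a$ arguments past $k_b$ arguments, with the degree shift by $|\theta_a|$ and $|\theta_b|$, matches on both sides; I would isolate it as a short lemma and verify it on transpositions, which generate $S_{k_a+k_b-1}$, exactly as the Koszul sign $\epsilon_{\boldsymbol p}$ is defined in the text. Once that lemma is in place, chaining the three displayed equalities above gives $\Or([\gamma,\beta]) = [\Or(\gamma),\Or(\beta)]_{\NR}$, and linearity in $\gamma$ and $\beta$ extends it from single graphs to arbitrary elements of $\bigoplus_k\Gra{k}$, completing the proof.
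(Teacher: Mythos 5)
Your proposal is correct and follows essentially the same route as the paper, which likewise derives the Proposition from the operadic intertwining property $\littleOr(\gamma_1\mathbin{\vec{\circ}_i}\gamma_2)=\littleOr(\gamma_1)\mathbin{\vec{\circ}_i}\littleOr(\gamma_2)$ (cited from \cite{Jost2013,WillwacherGRT}), the degree match $|\littleOr(\gamma)|=-|E(\gamma)|$, and the passage to skew-symmetrized images on both sides. If anything, your write-up is more explicit than the paper's, which states the Proposition as the conclusion of this discussion and leaves the Koszul-sign and skew-symmetrization bookkeeping (your final lemma) to the cited references.
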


\begin{cor}
$\Or(\Id(\gamma)) = \Or([\stick,\gamma]) = [\Or(\stick), \Or(\gamma)]_{\NR} = [\pi_S, \Or(\gamma)]_{\NR}$.
\end{cor}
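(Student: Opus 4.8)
The plan is to read off the three equalities one at a time, each of them being either a definition already recalled or a direct application of the preceding Proposition.

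First I would invoke the definition of the graph differential stated in the Introduction and in Table~\ref{TabParallel}: on $\bigoplus_k \Gra{k}$ one has $\Id = [\stick,\cdot]$, the adjoint action of the Maurer--Cartan element $\stick$. Hence $\Id(\gamma) = [\stick,\gamma]$, which is the first equality. Next, apply $\Or$ to both sides and use the Proposition (that $\Or$ is a Lie algebra morphism) with the bracket evaluated on the ordered pair $(\stick,\gamma)$; this gives $\Or([\stick,\gamma]) = [\Or(\stick),\Or(\gamma)]_{\NR}$, the second equality. Finally, substitute $\Or(\stick) = \pi_S$ from~\eqref{EqSchoutenPi}, which produces the last expression $[\pi_S,\Or(\gamma)]_{\NR}$ and closes the chain.

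The only point that deserves a word is the compatibility of the two sign conventions in the two commutators: on the graph side the bracket carries the factor $(-)^{|E(a)|\cdot|E(b)|}$, whereas the Nijenhuis--Richardson bracket carries $(-)^{|\theta_a|\cdot|\theta_b|}$. For a graph $\gamma$ with $e$ edges one has $|\Or(\gamma)| = -e = -|E(\gamma)|$ (the own grading of $\littleOr(\gamma)$ is minus its number of edges), so with $a=\stick$ a single edge both prefactors equal $(-)^{e}$ and the two brackets agree on this pair. But this agreement is exactly an instance of the already-granted statement that $\Or$ is a Lie algebra morphism, so no genuine obstacle remains and the Corollary is a formal consequence. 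As a remark, recalling that $\partial = [\pi_S,\cdot]_{\NR}$ is the differential on $\End_{\text{skew}}(\Tpoly(M)[1])$, the displayed chain is precisely the identity $\Or\circ\Id = \partial\circ\Or$; together with the Proposition this says that $\Or$ is a morphism of differential graded Lie algebras, whence the commutative square drawn above.
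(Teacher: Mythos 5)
Your argument is correct and is exactly the route the paper takes: the Corollary is stated there as an immediate consequence of the definition $\Id=[\stick,\cdot]$, the Proposition that $\Or$ is a Lie algebra morphism, and the identification $\Or(\stick)=\pi_S$ from~\eqref{EqSchoutenPi}. Your sign-compatibility remark (both prefactors equal $(-)^{|E(\gamma)|}$ since $|\Or(\gamma)|=-|E(\gamma)|$ and $|\pi_S|=-1$) is a harmless consistency check already subsumed by the morphism property, as you note.
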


Let there be $k$ vertices and $2k-2$ edges in~$\gamma$, whence $k+1$ vertices in~$\Id(\gamma)$.
Evaluating both sides of the endomorphism equality $\Or(\Id(\gamma)) = [\pi_S, \Or(\gamma)]_{\NR}$ at a tuple of Poisson bi-vectors~$\cP$, 
we have that $\Or([\stick, \gamma])(\cP\otimes\ldots\otimes \cP)={}$
\begin{align}
&= (\insertion{\pi_S}{\Or(\gamma)})(\cP\otimes\ldots\otimes \cP) - 
(-)^{( |\pi_S| = -1 ) \cdot ( |\Or(\gamma)|=-|E(\gamma)| )}
(\insertion{\Or(\gamma)}{\pi_S})(\cP\otimes\ldots\otimes \cP) \notag \\
&= \Or(\gamma)(\pi_S(\cP,\cP),\underline{\cP,\ldots,\cP\ }_{k-1}) + \ldots + \Or(\gamma)(\underline{\cP,\ldots,\cP\ }_{k-1},\pi_S(\cP,\cP)) - \notag \\
& \hskip 4em -\pi_S(\Or(\gamma)(\underline{\cP,\ldots,\cP\ }_{k}), \cP) - \pi_S(\cP,\Or(\gamma)(\underline{\cP,\ldots,\cP\ }_{k})). \label{EqVal} 
\end{align}

\begin{thm}
Whenever $\cP$ is a Poisson bi-vector so that $\pi_S(\cP,\cP) = 0 = \schouten{\cP,\cP}$, and whenever $\gamma \in \ker \Id$ is a cocycle on $k$ vertices and $2k-2$ edges \textup{(}so that $[\stick,\gamma] = 0$\textup{),} then $\Or(\gamma)(\underline{\cP\otimes\ldots\otimes \cP\ }_{k})$ is a Poisson cocycle \textup{(}so that $\schouten{\cP,\Or(\gamma)(\cP,\ldots,\cP)} \doteq 0$ modulo the Jacobi identity $\schouten{\cP,\cP} = 0$ for the Poisson structure\textup{)}.
\end{thm}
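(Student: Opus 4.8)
The plan is to read the statement off the identity~\eqref{EqVal}, which already expresses $\Or([\stick,\gamma])$ evaluated on copies of~$\cP$ as a sum of insertions of the ``Jacobiator'' $\pi_S(\cP,\cP)$ into the vertices of~$\gamma$, minus the two ``outer'' terms $\pi_S(\Or(\gamma)(\cP^{\otimes k}),\cP)$ and $\pi_S(\cP,\Or(\gamma)(\cP^{\otimes k}))$. It then remains to specialize to a cocycle~$\gamma$ and to a Poisson~$\cP$, to recognize the two outer terms as equal Schouten brackets, and to keep track of the signs.

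First I would use $\gamma\in\ker\Id$: then $[\stick,\gamma]=0$, hence $\Or([\stick,\gamma])=0$ by linearity of~$\Or$, so the right-hand side of~\eqref{EqVal} vanishes on the tuple of $k+1$ copies of~$\cP$:
\begin{multline*}
0 = \Or(\gamma)(\pi_S(\cP,\cP),\cP,\ldots,\cP) + \cdots + \Or(\gamma)(\cP,\ldots,\cP,\pi_S(\cP,\cP)) \\
{} - \pi_S(\Or(\gamma)(\cP^{\otimes k}),\cP) - \pi_S(\cP,\Or(\gamma)(\cP^{\otimes k})).
\end{multline*}
Next, a count of the parity-odd variables shows that $\cQ\coloneqq\Or(\gamma)(\cP^{\otimes k})$ is a bi-vector: the $k$ copies of~$\cP$ bring in $2k$ letters~$\xi$, of which the $2k-2$ edge operators $\vec{\Delta}_{ij}$ consume $2k-2$. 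Thus $|\cQ|=|\cP|=2$, and the definition $\pi_S(p_1,p_2)=(-)^{|p_1|-1}\schouten{p_1,p_2}$ turns the two outer terms into $\schouten{\cQ,\cP}$ and $\schouten{\cP,\cQ}$; since the Schouten bracket of two bi-vectors is graded-symmetric, $\schouten{\cQ,\cP}=\schouten{\cP,\cQ}$, so these two terms together give $2\,\schouten{\cP,\cQ}$. Similarly $\pi_S(\cP,\cP)=-\schouten{\cP,\cP}$, so after rearranging, the display becomes precisely~\eqref{EqOrCocycle}, with one copy of the tri-vector $\schouten{\cP,\cP}$ inserted consecutively into the vertices of~$\gamma$ on the right. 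Finally, for a Poisson structure $\schouten{\cP,\cP}=0$, so every term on the right-hand side of~\eqref{EqOrCocycle} vanishes; therefore $\schouten{\cP,\Or(\gamma)(\cP,\ldots,\cP)}=0$, which proves the theorem and, as a bonus, exhibits the operator $\Diamond(\cP,\schouten{\cP,\cP})$ factoring the Poisson-cocycle condition through the Jacobi identity.

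The step I expect to be the main obstacle is not this structural reasoning but the underlying bookkeeping of signs: one must check that the passage from the Nijenhuis--Richardson bracket $[\pi_S,\Or(\gamma)]_{\NR}$ to~\eqref{EqVal} yields exactly the terms displayed there with the signs shown (in particular that the Koszul factor $(-)^{|\pi_S|\cdot|\Or(\gamma)|}=(-)^{2k-2}$ equals $+1$), that all of the Jacobiator-insertion terms enter with the same sign so that the right-hand side of~\eqref{EqOrCocycle} is not forced to cancel for trivial reasons, and that the global sign is consistent with the coefficient~$2$ on the left. This is exactly what the careful earlier set-up of~$\littleOr$ and~$\Or$, of the graded and Nijenhuis--Richardson brackets, and of the normalization $\Or(\stick)(p_1,p_2)=(-)^{\bar{p}_1}\schouten{p_1,p_2}$ is designed to render routine.
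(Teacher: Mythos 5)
Your proposal is correct and follows essentially the paper's own route: both read the conclusion off identity~\eqref{EqVal}, whose left-hand side dies because $[\stick,\gamma]=0$, whose Jacobiator-insertion terms die by linearity once $\schouten{\cP,\cP}=0$, and whose remaining two terms combine (via $\pi_S(p_1,p_2)=(-)^{\bar p_1}\schouten{p_1,p_2}$ and the symmetry of the Schouten bracket on bi-vectors) into $2\,\schouten{\cP,\Or(\gamma)(\cP,\ldots,\cP)}$. Your extra bookkeeping (the $\xi$-count showing $\Or(\gamma)(\cP^{\otimes k})$ is a bi-vector, the Koszul factor $(-)^{2k-2}=+1$, and the explicit rearrangement into~\eqref{EqOrCocycle}) matches what the paper relegates to Corollary~\ref{CorDiamond}.
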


\begin{proof}
This is immediate from \eqref{EqVal}: its l.-h.s.\ 
vanishes by $\gamma \in \ker \Id$; in its right-hand side, the Jacobiator $\pi_S(\cP,\cP)$ is an argument of endomorphisms which are \emph{linear}, hence all the $k$ terms 
in the minuend vanish.
The subtrahend remains, it yields 
$2$ times the cocycle condition $\Or(\gamma)(\cP,\ldots,\cP) \in \ker\, \schouten{\cP,\cdot}$.
\end{proof}


\begin{cor}[A realization of $\Diamond$ by Leibniz graphs]
\label{CorDiamond}
The operator $\Diamond$ in the factorization problem 
\[
\partial_\cP(\Or(\gamma)(\cP,\ldots,\cP)) = \Diamond(\cP, \schouten{\cP,\cP}), \qquad \gamma \in \ker \Id,
\]
is the sum of Leibniz graphs obtained from $\gamma$ by inserting the Jacobiator $\schouten{\cP,\cP}$ into one of its vertices \textup{(}
by the Leibniz rule\textup{)} and skew-symmetrizing w.r.t.\ 
the sinks.
\end{cor}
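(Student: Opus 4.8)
The plan is to unwrap the evaluated identity~\eqref{EqVal} and then recognise its right-hand side graphically. Since $\gamma\in\ker\Id$, the left-hand side $\Or([\stick,\gamma])(\cP\otimes\ldots\otimes\cP)$ of~\eqref{EqVal} vanishes, so its ``minuend'' $\sum_{i=1}^{k}\Or(\gamma)(\cP,\ldots,\pi_S(\cP,\cP),\ldots,\cP)$, with $\pi_S(\cP,\cP)$ in the $i^{\text{th}}$ slot of the $i^{\text{th}}$ term, equals its ``subtrahend'' $\pi_S\bigl(\Or(\gamma)(\cP,\ldots,\cP),\cP\bigr)+\pi_S\bigl(\cP,\Or(\gamma)(\cP,\ldots,\cP)\bigr)$. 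First I would rewrite both sides through the Schouten bracket. Put $Q\coloneqq\Or(\gamma)(\cP,\ldots,\cP)$, a bi-vector (so $\bar{Q}=1$); by~\eqref{EqSchoutenPi} one has $\pi_S(Q,\cP)=(-)^{\bar{Q}}\schouten{Q,\cP}=-\schouten{Q,\cP}$, likewise $\pi_S(\cP,Q)=-\schouten{\cP,Q}$ and $\pi_S(\cP,\cP)=-\schouten{\cP,\cP}$; since the Schouten bracket of two bi-vectors is symmetric, $\schouten{Q,\cP}=\schouten{\cP,Q}$, the subtrahend collapses to $-2\,\schouten{\cP,Q}=-2\,\partial_\cP(Q)$. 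Identity~\eqref{EqVal} thus becomes exactly the factorization formula~\eqref{EqOrCocycle}, namely $2\,\partial_\cP(Q)=\sum_{i=1}^{k}\Or(\gamma)(\cP,\ldots,\schouten{\cP,\cP},\ldots,\cP)$, which already exhibits $\Diamond(\cP,\schouten{\cP,\cP})=\partial_\cP(Q)$ as one half of the right-hand side. It remains to identify the summands with Leibniz graphs.

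For the latter I would use the two structural facts of~\S\ref{SecOr}: that $\littleOr$ is a morphism of operads, $\littleOr(\stick\mathbin{\vec{\circ}_i}\gamma)=\littleOr(\stick)\mathbin{\vec{\circ}_i}\littleOr(\gamma)$, and that the stick realizes the Schouten bracket, $\littleOr(\stick)(\cP,\cP)=\Or(\stick)(\cP,\cP)=-\schouten{\cP,\cP}$ by~\eqref{EqSchoutenPi}. Evaluating the morphism identity at $k+1$ copies of $\cP$ gives
\[
\littleOr(\stick\mathbin{\vec{\circ}_i}\gamma)(\cP\otimes\ldots\otimes\cP)=\littleOr(\gamma)\bigl(\cP,\ldots,\littleOr(\stick)(\cP,\cP),\ldots,\cP\bigr)=-\littleOr(\gamma)(\cP,\ldots,\schouten{\cP,\cP},\ldots,\cP),
\]
the Jacobiator $\schouten{\cP,\cP}$ sitting in the $i^{\text{th}}$ slot. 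On the left, by the very definition of the operadic insertion, $\stick\mathbin{\vec{\circ}_i}\gamma$ is the graph in which the $i^{\text{th}}$ vertex of $\gamma$ is expanded into the two vertices of the stick joined by its edge, with the half-edges formerly meeting vertex~$i$ re-distributed over the two new vertices in all possible ways --- which is exactly the combinatorics of the Leibniz rule applied to the two copies of $\cP$ inside $\schouten{\cP,\cP}$. Passing from $\littleOr$ to $\Or$, that is, skew-symmetrizing over the sinks --- now three of them, since replacing the bi-vector in vertex~$i$ by the tri-vector $\schouten{\cP,\cP}$ creates one extra output leg --- and summing over $i=1,\ldots,k$ identifies $\sum_i\Or(\gamma)(\cP,\ldots,\schouten{\cP,\cP},\ldots,\cP)=2\,\Diamond(\cP,\schouten{\cP,\cP})$ with twice the sum of the Leibniz graphs $\stick\mathbin{\vec{\circ}_i}\gamma$ skew-symmetrized with respect to the sinks --- the factor~$2$ being absorbed into the coefficients of those Leibniz graphs, as in~\cite{sqs15, f16}.

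The routine parts here are the sign bookkeeping, the placement of the overall factor~$2$, and checking that the valence and orientation conventions match --- that once $\stick\mathbin{\vec{\circ}_i}\gamma$ is oriented so that every internal vertex has out-degree two it becomes one of the Kontsevich--Leibniz graphs of~\cite{sqs15, f16}, carrying three sinks which correspond to the three legs of the tri-vector $\schouten{\cP,\Or(\gamma)(\cP,\ldots,\cP)}$. The one genuinely delicate point is the interplay between the sink-skew-symmetrization, which upgrades $\littleOr$ to $\Or$, and the operadic insertion $\vec{\circ}_i$, which acts at internal vertices: one must verify that skew-symmetrizing the inserted graph $\stick\mathbin{\vec{\circ}_i}\gamma$ over its sinks coincides with evaluating the already-skew endomorphism $\Or(\gamma)$ on the not-yet-skew argument $\littleOr(\stick)(\cP,\cP)$ and then skew-symmetrizing the output --- i.e.\ that $\Or$, and not merely $\littleOr$, remains compatible with $\vec{\circ}_i$ once the arguments are all set equal to $\cP$. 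This is the compatibility recorded in~\S\ref{SecOr}, used here in its sharpest form; with it in hand, the Corollary follows.
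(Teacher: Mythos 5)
Your argument is correct and follows essentially the same route as the paper's own constructive proof: setting the left-hand side of~\eqref{EqVal} to zero for $\gamma\in\ker\Id$, using linearity of $\Or(\gamma)$ and the sign $\pi_S(\cP,\cP)=-\schouten{\cP,\cP}$ to arrive at~\eqref{EqOrCocycle}, and then reading off $\Diamond$ as half the sum of the terms $\Or(\gamma)(\cP,\ldots,\schouten{\cP,\cP},\ldots,\cP)$. Your extra paragraph identifying these terms with the graphs $\stick\mathbin{\vec{\circ}_i}\gamma$ via the operad-morphism property of $\littleOr$ only makes explicit what the paper leaves implicit in the phrase ``the explicitly given set of Leibniz graphs,'' and the sign and factor-of-two bookkeeping checks out.
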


\begin{proof}[Constructive proof]
Indeed, as \eqref{EqVal} yields (with $\pi_S(\cP,\cP) = (-)^{2-1}\schouten{\cP,\cP}$) 
equality~\eqref{EqOrCocycle},
\[ \schouten{\cP, \Or(\gamma)(\cP,\ldots,\cP)} = \tfrac{1}{2}\big\{\Or(\gamma)(\schouten{\cP,\cP},\cP,\ldots,\cP) + \ldots + \Or(\gamma)(\cP,\ldots,\cP,\schouten{\cP,\cP})\big\}, \label{EqDiamond}
\]
the left-hand side of the cocycle condition factors, in particular, through the explicitly given set of Leibniz graphs with $\schouten{\cP,\cP}$ in one vertex in the right-hand side.
\end{proof}

\begin{cor}\label{CorCoboundary}
Suppose that $\delta = \Id(\gamma)$ is a trivial $\Id$-cocycle in the graph complex\textup{:} let there be $k$ vertices and $2k-1$ edges in $\gamma$.
Then, reading \eqref{EqVal} again, we have that for $\cP$ Poisson, 
\begin{multline}\label{EqFieldX}
\Or(\delta)(\cP,\ldots,\cP) = 0 + \ldots + 0 - \pi_S(\underbrace{\Or(\gamma)(\cP,\ldots,\cP)}_{1\text{-vector}},\cP) - \pi_S(\cP,\underbrace{\Or(\gamma)(\cP,\ldots,\cP)}_{1\text{-vector}}) ={}\\
 -(-)^{1-1} \schouten{\Or(\gamma)(\cP,\ldots,\cP),\cP} - (-)^{2-1}\schouten{\cP,\Or(\gamma)(\cP,\ldots,\cP)} 
= 2\,\schouten{\cP, \Or(\gamma)(\cP,\ldots,\cP)}.
\end{multline}
Equality~\eqref{EqFieldX}
provides the composition of the $1$-vector field $\cX(\cP) \coloneqq 2\, \Or(\gamma)(\cP,\ldots,\cP)$ trivializing $\Or(\delta)(\cP,\ldots,\cP) = \schouten{\cP, \cX(\cP)}$ in the Poisson cohomology.
\end{cor}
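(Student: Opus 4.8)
The plan is to obtain the statement as a direct specialisation of the evaluation identity~\eqref{EqVal}, now applied to a ``potential'' graph $\gamma$ carrying one more edge than in the Theorem. First I would fix the bookkeeping. If $\gamma$ has $k$ vertices and $2k-1$ edges, then $\delta=\Id(\gamma)=[\stick,\gamma]$ has $k+1$ vertices and $2k$ edges, so $\Or(\delta)$ has arity $k+1$ and $\Or(\delta)(\cP,\ldots,\cP)$ (with $k+1$ copies) is a bi-vector, of degree $2(k+1)-2k=2$ --- a genuine deformation term $\dot{\cP}$. On the other hand $\Or(\gamma)$ has arity $k$ and own degree $-(2k-1)$, so $\Or(\gamma)(\cP,\ldots,\cP)$ (with $k$ copies) is a $1$-vector field, of degree $2k-(2k-1)=1$; the skew-symmetrisation over the two sinks is already built into $\Or$, so no auxiliary argument has to be appended.

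Next I would plug $\delta=[\stick,\gamma]$ into~\eqref{EqVal} and use that $\cP$ is Poisson: $\pi_S(\cP,\cP)=(-)^{2-1}\schouten{\cP,\cP}=0$. Each term of the minuend in~\eqref{EqVal} has $\pi_S(\cP,\cP)$ inserted into one of the $k$ arguments of the multilinear map $\Or(\gamma)$, hence vanishes, and only the subtrahend survives:
\[
\Or(\delta)(\cP,\ldots,\cP)=-\pi_S\bigl(\Or(\gamma)(\cP,\ldots,\cP),\cP\bigr)-\pi_S\bigl(\cP,\Or(\gamma)(\cP,\ldots,\cP)\bigr).
\]
I would then unwind $\pi_S$ with the normalisation $\pi_S(a,b)=(-)^{|a|-1}\schouten{a,b}$ from~\eqref{EqSchoutenPi}: since $|\Or(\gamma)(\cP,\ldots,\cP)|=1$ and $|\cP|=2$, the two surviving terms become $-\schouten{\Or(\gamma)(\cP,\ldots,\cP),\cP}+\schouten{\cP,\Or(\gamma)(\cP,\ldots,\cP)}$. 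Finally I would invoke the shifted-graded skew-symmetry $\schouten{a,b}=-(-)^{\bar{a}\bar{b}}\schouten{b,a}$ of the Schouten bracket, with $\bar{a}=0$ for the $1$-vector and $\bar{b}=1$ for the bi-vector $\cP$, so that $\schouten{\Or(\gamma)(\cP,\ldots,\cP),\cP}=-\schouten{\cP,\Or(\gamma)(\cP,\ldots,\cP)}$; the two terms then collapse to $2\,\schouten{\cP,\Or(\gamma)(\cP,\ldots,\cP)}$, which is exactly~\eqref{EqFieldX}.

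The conclusion is then immediate: putting $\cX(\cP)\coloneqq 2\,\Or(\gamma)(\cP,\ldots,\cP)$, a $1$-vector field again encoded by Kontsevich oriented graphs, one has $\Or(\delta)(\cP,\ldots,\cP)=\schouten{\cP,\cX(\cP)}=\partial_\cP\bigl(\cX(\cP)\bigr)$, so the flow $\dot{\cP}=\Or(\delta)(\cP)$ is a $\partial_\cP$-coboundary and hence trivial in Poisson cohomology. I do not expect any genuine obstacle here --- the Lie algebra morphism property of $\Or$ and the identity $\Or(\stick)=\pi_S$, which yield~\eqref{EqVal}, are already in hand --- the only point demanding care is the consistent propagation of the Koszul and shift signs between the $\pi_S$-normalisation and the bare bracket $\schouten{\cdot,\cdot}$, together with the degree count ensuring that $\Or(\gamma)(\cP,\ldots,\cP)$ really is a vector field (so that the prefactors $(-)^{|a|-1}$ take the values $+1$ and $-1$ used above).
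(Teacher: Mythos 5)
Your argument is exactly the paper's: Corollary~\ref{CorCoboundary} is obtained by re-reading~\eqref{EqVal} with the potential $\gamma$ (on $k$ vertices and $2k-1$ edges) in place of a cocycle, killing the minuend by $\pi_S(\cP,\cP)=0$ via linearity, unwinding $\pi_S$ into $\schouten{\cdot,\cdot}$ with the sign convention~\eqref{EqSchoutenPi}, and using the (shifted-)graded antisymmetry to merge the two surviving terms into $2\,\schouten{\cP,\Or(\gamma)(\cP,\ldots,\cP)}$, whence $\cX(\cP)=2\,\Or(\gamma)(\cP,\ldots,\cP)$. Your only additions are the explicit arity/degree bookkeeping confirming that $\Or(\gamma)(\cP,\ldots,\cP)$ is a $1$-vector, which the paper records via the underbraces; no discrepancy with the paper's reasoning.
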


\begin{rem}\label{RemImproperInCoboundary}
From the above proof we also recognize the composition of Leibniz graphs (i.e.\ improper terms which vanish by the Jacobi identity $\schouten{\cP,\cP} = 0$) in the factorization problem
\[ \Or(\delta)(\cP,\ldots,\cP) - \schouten{\cP, \cX(\cP)} \doteq \nabla(\cP,\schouten{\cP,\cP}). \]
Namely, it is the terms $\Or(\gamma)(\pi_S(\cP,\cP),\cP,\ldots,\cP) + \ldots + \Or(\gamma)(\cP,\ldots,\cP,\pi_S(\cP,\cP))$ from~\eqref{EqVal}.
\end{rem}

\subsection{The morphism $\Or$ at work: examples}\label{SecExamples}
\noindent%
The following collection 
of examples illustrates (\textit{i}) 
the construction of infinitesimal symmetries $\dot{\cP} = \cQ(\cP)$ for Poisson structures $\cP$ by orienting cocycles $\boldsymbol{\gamma} \in \ker \Id$, so that $\cQ = \Or(\boldsymbol{\gamma})$,
and (\textit{ii}) 
the construction of trivializing vector fields $\cX = 2\Or(\boldsymbol{\gamma})$ in $\cQ = \Or(\Id(\boldsymbol{\gamma}))$.
At the same time, we detect (\textit{iii}) 
the non-uniqueness of factorizations $\schouten{\cP,\cQ(\cP)} = \Diamond(\cP,\schouten{\cP,\cP})$ 
for such cocycles and flows.

We remember 
that the (iterated commutators of the)
infinite sequence of $\Id$-cocycles $\boldsymbol{\gamma}_{2\ell+1}$, marked by $(2\ell+1)$-gon wheel graphs (see~\cite{WillwacherGRT}), is a regular source of universal symmetries for Poisson structures.
Moreover, no flows $\dot{\cP} = \cQ(\cP)$ other than these ones, $\cQ 
(\cP) = \Or(\boldsymbol{\gamma} 
)(\cP)$, are currently known (under the assumption that the cocycles~$\boldsymbol{\gamma}$ be sums of connected graphs).

Let us remark finally that it is also an \textbf{open problem} whether these flows, $\cQ 
(\cP) = \Or(\boldsymbol{\gamma} 
)(\cP)$, can be Poisson cohomology nontrivial, that is $\cQ \neq \schouten{\cP,\cX}$ for some Poisson structure $\cP$ and a globally defined vector field~$\cX$ on an affine manifold~$
M$.

\begin{example}[The tetrahedron $\boldsymbol{\gamma}_3$]
For the tetrahedron $\boldsymbol{\gamma}_3 \in \ker \Id$, i.e.\ the full graph \raisebox{-4.75pt}[0pt][0pt]{
{\unitlength=0.75mm
\begin{picture}(7,7)(-3.5,-3.5)
\put(3.5,3.5){\circle*{1}}
\put(-3.5,3.5){\circle*{1}}
\put(-3.5,-3.5){\circle*{1}}
\put(3.5,-3.5){\circle*{1}}
\put(3.5,3.5){\line(-1,0){7}}
\put(-3.5,3.5){\line(0,-1){7}}
\put(-3.5,-3.5){\line(1,0){7}}
\put(3.5,-3.5){\line(0,1){7}}
\qbezier[30](3.5,3.5)(2,2)(0.75,0.75)
\qbezier[30](-3.5,-3.5)(-2,-2)(-1,-1)
\put(3.5,-3.5){\line(-1,1){7}}
\end{picture}
    }} on $4$ vertices and $6$ edges (see \cite{Ascona96}), both the Kontsevich flow $\dot{\cP} = \cQ_{1:6/2}(\cP)$ and the factorizing operator $\Diamond$ in the problem $\schouten{\cP, \cQ_{1:6/2}(\cP)} = \Diamond(\cP,\schouten{\cP,\cP})$ are presented in \cite{f16} (cf. \cite{Kontsevich2017Bourbaki}).
The operator $\Diamond$ is 
of the form given by Corollary \ref{CorDiamond}.
\end{example}

\begin{example}[The pentagon-wheel cocycle $\boldsymbol{\gamma}_5 \in \ker \Id$]%
{\unitlength=1mm
\begin{picture}(0,0)(0,7.5)
\put(0,0){$\boldsymbol{\gamma}_5 = {}$}
\put(20,0){
{\unitlength=0.3mm
\begin{picture}(55,53)(5,-5)
\put(27.5,8.5){\circle*{3}}
\put(0,29.5){\circle*{3}}
\put(-27.5,8.5){\circle*{3}}
\put(-17.5,-23.75){\circle*{3}}
\put(17.5,-23.75){\circle*{3}}
\qbezier
(27.5,8.5)(0,29.5)(0,29.5)
\qbezier
(0,29.5)(-27.5,8.5)(-27.5,8.5)
\qbezier
(-27.5,8.5)(-17.5,-23.75)(-17.5,-23.75)
\qbezier
(-17.5,-23.75)(17.5,-23.75)(17.5,-23.75)
\qbezier
(17.5,-23.75)(27.5,8.5)(27.5,8.5)
\put(0,0){\circle*{3}}
\qbezier
(27.5,8.5)(0,0)(0,0)
\qbezier
(0,29.5)(0,0)(0,0)
\qbezier
(-27.5,8.5)(0,0)(0,0)
\qbezier
(-17.5,-23.75)(0,0)(0,0)
\qbezier
(17.5,-23.75)(0,0)(0,0)
\end{picture}
}
}
\put(30,0){${}+\dfrac{5}{2}$}
\put(50,0){
{\unitlength=0.4mm
\begin{picture}(50,30)(0,-4)
\put(12,0){\circle*{2.5}}
\put(-12,0){\circle*{2.5}}
\put(25,15){\circle*{2.5}}
\put(-25,15){\circle*{2.5}}
\put(-25,-15){\circle*{2.5}}
\put(25,-15){\circle*{2.5}}
\put(-12,0){\line(1,0){24}}
\put(-25,15){\line(1,0){50}}
\put(-25,-15){\line(1,0){50}}
\put(-25,-15){\line(0,1){32}}
\put(25,15){\line(0,-1){32}}
\qbezier
(25,15)(12,0)(12,0)
\qbezier
(-25,15)(-12,0)(-12,0)
\qbezier
(-25,-15)(-12,0)(-12,0)
\qbezier
(25,-15)(12,0)(12,0)
\put(-12.5,17){\oval(25,10)[t]}
\put(12.5,-17){\oval(25,10)[b]}
\put(0,2){\line(0,1){11}}
\put(0,-2){\line(0,-1){11}}
\end{picture}
}%
}
\end{picture}%
}%

\smallskip
\noindent\parbox{0.60\textwidth}{%
For the pentagon\/-\/wheel cocycle,
the set of oriented Kon\-tse\-vich graphs that encode the flow $\dot{\cP} = \Or(\boldsymbol{\gamma}_5)(\cP)$ is listed~in \cite{sqs17}.\hfill
The resulting differential polynomial expression of this infinitesimal symmetry%
}
\\[1pt]
is available 
in Appendix \ref{AppDiffPoly5} below.
But the factorizing operator for $\Or(\boldsymbol{\gamma}_5)(\cP)$ reported in \cite{sqs17}, i.e.\ expressing $\schouten{\cP, \Or(\boldsymbol{\gamma}_5)(\cP)}$ as a sum of Leibniz graphs, is \emph{different} from the operator $\Diamond$ which Corollary \ref{CorDiamond} provides for the cocycle $\boldsymbol{\gamma}_5$.
This demonstrates that such operators can be non-unique (as one obtains it in this particular example).\footnote{We say that two Leibniz graphs (i.e.\ graphs with a tri\/-\/vector $\schouten{\cP,\cP}$ in a vertex) are \emph{adjacent vertices} in the Leibniz meta\/-\/graph if the expansions of these Leibniz graphs have at least one Kontsevich oriented graph in common.
(In the meta\/-\/graphs, multiple edges are allowed.)
The known existence of several factorizations, $\schouten{\cP,\cQ}=\Diamond_1\bigl(\cP$,\ $\schouten{\cP,\cP}\bigr) = \Diamond_2\bigl(\cP$,\ $\schouten{\cP,\cP}\bigr)$,
into Leibniz graphs reveals the identities $(\Diamond_1-\Diamond_2)\,\bigl(\cP$,\ $\schouten{\cP,\cP}\bigr)\equiv0$ for $\Diamond_1\neq\Diamond_2$, that is, a nontrivial topology of the meta\/-\/graph. Its study is an \textbf{open problem}.}
\end{example}

\begin{example}[Coboundary $\boldsymbol{\delta}_6 = \Id(\boldsymbol{\beta}_6)$]\label{ExBeta6}
Take the only nonzero (with%
{\unitlength=1mm
\begin{picture}(0,0)(-6,1.5)
\put(0,0){$\boldsymbol{\beta}_6 ={}$} 
\put(16,-4){
{\unitlength=0.09mm
\begin{picture}(0,0)
\put(0,0){\circle*{10}}
\put(0,0){\line(0,1){120}}
\put(0,120){\circle*{10}}
\put(0,120){\line(1,0){120}}
\put(120,120){\circle*{10}}
\put(0,0){\line(1,0){120}}
\put(120,0){\circle*{10}}
\put(120,0){\line(0,1){120}}
\put(60,60){\circle*{10}}
\qbezier[60](60,60)(90,90)(120,120)
\qbezier[60](60,60)(30,90)(0,120)
\qbezier[60](60,60)(90,30)(120,0)
\qbezier[60](60,60)(30,30)(0,0)
\put(-60,60){\circle*{10}}
\qbezier[60](-60,60)(-30,90)(0,120)
\qbezier[60](-60,60)(-30,30)(0,0)
\put(-60,60){\line(1,0){120}}
\end{picture}
}}
\end{picture}
}

\smallskip
\noindent\parbox{0.74\textwidth}{%
respect to the wedge ordering of edges)
connected graph~$\boldsymbol{\beta}_6$
on~six
}
\\[1pt]
vertices and $11$ edges,
and put $\boldsymbol{\delta}_6 = \smash{\Id(\boldsymbol{\beta}_6)} \in \ker \Id$ (indeed,
$\Id^2 = 0$).
In view of Corollary~\ref{CorCoboundary} and Remark \ref{RemImproperInCoboundary}, we verify the decomposition, 
\[ \Or(\Id(\boldsymbol{\beta}_6))(\cP) = \schouten{\cP, \cX(\cP)} + \nabla(\cP,\schouten{\cP,\cP}),
\] into the Poisson cohomology trivial and improper terms.
Indeed, the vector field~$\cX$ stems from $\Or(\boldsymbol{\beta}_6)(\cP,\ldots,\cP)$ and the improper part comes from the terms like $\Or(\boldsymbol{\beta}_6)(\\\schouten{\cP,\cP}, \ldots,\cP)$.
Interestingly, all the graphs from the $\partial_\cP$-\/exact term $\schouten{\cP,\cX}$ also appear in the improper terms, and in fact they cancel.
(There are $598$ graphs in the former and $2098$ in the latter; $2098 - 598 = 1500$, cf. Table \ref{TabNumGraphs} below.)
\end{example}

\begin{example}[The heptagon-wheel cocycle $\boldsymbol{\gamma}_7 \in \ker \Id$]
The $\Id$-cocycle starting with the heptagon\/-\/wheel graph is presented in~\cite{JNMP17}.
The flow $\dot{\cP} = \Or(\boldsymbol{\gamma}_7)(\cP)$ is realized by 37,185 Kontsevich graphs on $2$ sinks; they are listed in a standard format (see \cite[Implementation 1]{f16}) at \texttt{http://rburing.nl/gamma7.zip}. 
The factorizing operator $\Diamond$ is provided by Corollary \ref{CorDiamond} so that the validity of cocycle equation $\schouten{\cP,\Or(\boldsymbol{\gamma}_7)(\cP)} = \Diamond(\cP,\schouten{\cP,\cP})$ is verified experimentally.
(It would be unfeasible to solve this equation w.r.t.\ 
the unknown coefficients of the Leibniz graphs in the right-hand side, pretending that a solution is not known from~\S\ref{SecOr}.
Note however that no uniqueness is claimed for this~$\Diamond$.)
\end{example}

\begin{table}[h!]
\caption{The number of graphs in the problem $\schouten{\cP, \Or(\gamma)(\cP)} = \Diamond(\cP, \schouten{\cP,\cP})$.}\label{TabNumGraphs}
\begin{center}
\begin{tabular}{l r r r r}
Cocycle: & $\boldsymbol{\gamma}_3$ & $\boldsymbol{\gamma}_5$ & $\boldsymbol{\delta}_6 = \Id(\boldsymbol{\beta}_6)$ & $\boldsymbol{\gamma}_7$ 
\\
\hline
\#vertices: & 4 & 6 & 7 & 8 \\
\#edges: & 6 & 10 & 12 & 14 \\
\#graphs: & 1 & 2 & 4 & 46 \\
\#or.graphs in $\cQ(\cP) = \Or(\gamma)(\cP,\ldots,\cP)$: & 3 & 167 & 1,500 & 37,185 \\
\#or.graphs in $\schouten{\cP,\cQ(\cP)}$: & 39 & 3,495 & 35,949 & 1,003,611 \\
\#skew Leibniz graphs in $\Diamond(\cP,\schouten{\cP,\cP})$: & 8 & 843 & 9,556 & 293,654
\end{tabular}
\end{center}
\end{table}

\noindent\textbf{Implementation.}\quad
\noindent All calculations above were performed by using the software packages
\texttt{graph\symbol{"5F}complex-cpp} 
and \texttt{kontsevich\symbol{"5F}graph\symbol{"5F}series-cpp}, 
which are released under the MIT free software license and available from
\texttt{https://github.com/rburing}.
Specifically, the programs \texttt{expanding\symbol{"5F}differential} and \texttt{kernel} have been used to find non-oriented graph cocycles $\gamma$, 
\texttt{orient} yields the sums of Kontsevich oriented graphs $\Or(\gamma)(\cP,\ldots,\cP)$ and sums of Leibniz graphs $\Or(\schouten{\cP,\cP},\ldots,\cP)$, and \texttt{schouten\symbol{"5F}bracket} implements the Schouten bracket.
The program \texttt{leibniz\symbol{"5F}expand} expands sums of Leibniz graphs into Kontsevich graphs, and \texttt{reduce\symbol{"5F}mod\symbol{"5F}skew} reduces sums of Kontsevich oriented graphs modulo skew-symmetry, $L\prec R = - R\prec L$, of the Left $\prec$ Right mark\/-\/up of outgoing edges.

{\small
\subsubsection*{Acknowledgements}
The research of RB was supported in part by IM 
JGU Mainz; AVK was partially supported by JBI RUG project 106552.
The authors are grateful to the organizers of the 32nd International Colloquium on Group Theoretical Methods in Physics (\textsc{Group32}, 9 -- 13 July 2018, CVUT Prague, Czech Republic) for partial financial support (for RB) and warm hospitality.
The authors thank M.~Kontsevich, A.~Sharapov, and T.~Willwacher 
for helpful discussions.
A part of this research was done while RB was visiting at RUG and AVK was visiting at JGU Mainz (supported by IM via project~5020). 

}

\newpage

\appendix

\section{The differential polynomial flow $\dot{\cP}=\Or(\boldsymbol{\gamma}_5)(\cP)$}
\label{AppDiffPoly5}
\noindent%
Here is the value $\cQ_5(\cP)(f,g)$ of the bi\/-\/vector~$\cQ_5(\cP) = \Or(\gamma_5)(\cP)$ at two functions~$f,g$:\footnote{In every term, the Einstein summation convention works for each repeated index (i.e.\ once upper and another time lower), the indices running from~$1$ to the dimension~$\dim M <\infty$ of the affine Poisson manifold~$
M$ at hand.}

{\tiny
\begin{align*}
10 \partial_{t} \partial_{m} \partial_{k} \cP^{ij} \partial_{p} \cP^{k\ell} \partial_{v} \partial_{r} \partial_{\ell} \cP^{mn} \partial_{n} \cP^{pq} \partial_{q} \cP^{rs} \partial_{s} \cP^{tv} \partial_{i} f \partial_{j} g 
-10 \partial_{p} \partial_{m} \partial_{k} \cP^{ij} \partial_{t} \cP^{k\ell} \partial_{v} \partial_{r} \partial_{\ell} \cP^{mn} \partial_{n} \cP^{pq} \partial_{q} \cP^{rs} \partial_{s} \cP^{tv} \partial_{i} f \partial_{j} g \\
+10 \partial_{r} \partial_{m} \cP^{ij} \partial_{t} \partial_{j} \cP^{k\ell} \partial_{v} \partial_{s} \partial_{\ell} \cP^{mn} \partial_{n} \cP^{pq} \partial_{p} \cP^{rs} \partial_{q} \cP^{tv} \partial_{i} f \partial_{k} g
-10 \partial_{r} \partial_{n} \cP^{ij} \partial_{t} \partial_{s} \partial_{j} \cP^{k\ell} \partial_{v} \partial_{k} \cP^{mn} \partial_{\ell} \cP^{pq} \partial_{p} \cP^{rs} \partial_{q} \cP^{tv} \partial_{i} f \partial_{m} g \\
+10 \partial_{p} \partial_{m} \cP^{ij} \partial_{t} \partial_{j} \cP^{k\ell} \partial_{v} \partial_{r} \partial_{\ell} \cP^{mn} \partial_{n} \cP^{pq} \partial_{q} \cP^{rs} \partial_{s} \cP^{tv} \partial_{i} f \partial_{k} g
-10 \partial_{t} \partial_{n} \cP^{ij} \partial_{v} \partial_{r} \partial_{j} \cP^{k\ell} \partial_{p} \partial_{k} \cP^{mn} \partial_{\ell} \cP^{pq} \partial_{q} \cP^{rs} \partial_{s} \cP^{tv} \partial_{i} f \partial_{m} g \\
-10 \partial_{t} \partial_{m} \cP^{ij} \partial_{p} \partial_{j} \cP^{k\ell} \partial_{v} \partial_{r} \partial_{\ell} \cP^{mn} \partial_{n} \cP^{pq} \partial_{q} \cP^{rs} \partial_{s} \cP^{tv} \partial_{i} f \partial_{k} g
-10 \partial_{p} \partial_{n} \cP^{ij} \partial_{t} \partial_{r} \partial_{j} \cP^{k\ell} \partial_{v} \partial_{k} \cP^{mn} \partial_{\ell} \cP^{pq} \partial_{q} \cP^{rs} \partial_{s} \cP^{tv} \partial_{i} f \partial_{m} g \\
-10 \partial_{t} \partial_{p} \cP^{ij} \partial_{q} \partial_{j} \cP^{k\ell} \partial_{\ell} \cP^{mn} \partial_{v} \partial_{r} \partial_{m} \cP^{pq} \partial_{n} \cP^{rs} \partial_{s} \cP^{tv} \partial_{i} f \partial_{k} g
+10 \partial_{s} \partial_{m} \cP^{ij} \partial_{j} \cP^{k\ell} \partial_{t} \partial_{p} \partial_{k} \cP^{mn} \partial_{\ell} \cP^{pq} \partial_{v} \partial_{n} \cP^{rs} \partial_{q} \cP^{tv} \partial_{i} f \partial_{r} g \\
+10 \partial_{t} \partial_{p} \cP^{ij} \partial_{j} \cP^{k\ell} \partial_{q} \partial_{k} \cP^{mn} \partial_{v} \partial_{r} \partial_{\ell} \cP^{pq} \partial_{n} \cP^{rs} \partial_{s} \cP^{tv} \partial_{i} f \partial_{m} g
-10 \partial_{t} \partial_{m} \cP^{ij} \partial_{j} \cP^{k\ell} \partial_{v} \partial_{p} \partial_{k} \cP^{mn} \partial_{\ell} \cP^{pq} \partial_{q} \partial_{n} \cP^{rs} \partial_{s} \cP^{tv} \partial_{i} f \partial_{r} g \\
-10 \partial_{r} \partial_{m} \cP^{ij} \partial_{j} \cP^{k\ell} \partial_{v} \partial_{p} \partial_{k} \cP^{mn} \partial_{\ell} \cP^{pq} \partial_{n} \cP^{rs} \partial_{s} \partial_{q} \cP^{tv} \partial_{i} f \partial_{t} g
+10 \partial_{t} \partial_{p} \cP^{ij} \partial_{v} \partial_{r} \partial_{j} \cP^{k\ell} \partial_{q} \partial_{k} \cP^{mn} \partial_{\ell} \cP^{pq} \partial_{n} \cP^{rs} \partial_{s} \cP^{tv} \partial_{i} f \partial_{m} g \\
-10 \partial_{t} \partial_{m} \cP^{ij} \partial_{v} \partial_{s} \partial_{j} \cP^{k\ell} \partial_{k} \cP^{mn} \partial_{\ell} \cP^{pq} \partial_{p} \partial_{n} \cP^{rs} \partial_{q} \cP^{tv} \partial_{i} f \partial_{r} g
+10 \partial_{p} \partial_{m} \cP^{ij} \partial_{t} \partial_{s} \partial_{j} \cP^{k\ell} \partial_{k} \cP^{mn} \partial_{\ell} \cP^{pq} \partial_{v} \partial_{n} \cP^{rs} \partial_{q} \cP^{tv} \partial_{i} f \partial_{r} g \\
-10 \partial_{t} \partial_{r} \cP^{ij} \partial_{v} \partial_{s} \partial_{j} \cP^{k\ell} \partial_{p} \partial_{k} \cP^{mn} \partial_{\ell} \cP^{pq} \partial_{n} \cP^{rs} \partial_{q} \cP^{tv} \partial_{i} f \partial_{m} g
+10 \partial_{r} \partial_{p} \cP^{ij} \partial_{j} \cP^{k\ell} \partial_{t} \partial_{k} \cP^{mn} \partial_{v} \partial_{n} \partial_{\ell} \cP^{pq} \partial_{q} \cP^{rs} \partial_{s} \cP^{tv} \partial_{i} f \partial_{m} g \\
+10 \partial_{r} \partial_{p} \cP^{ij} \partial_{t} \partial_{s} \partial_{j} \cP^{k\ell} \partial_{v} \partial_{k} \cP^{mn} \partial_{\ell} \cP^{pq} \partial_{n} \cP^{rs} \partial_{q} \cP^{tv} \partial_{i} f \partial_{m} g
+10 \partial_{t} \partial_{p} \cP^{ij} \partial_{j} \cP^{k\ell} \partial_{r} \partial_{k} \cP^{mn} \partial_{v} \partial_{n} \partial_{\ell} \cP^{pq} \partial_{q} \cP^{rs} \partial_{s} \cP^{tv} \partial_{i} f \partial_{m} g \\
-10 \partial_{t} \partial_{n} \cP^{ij} \partial_{j} \cP^{k\ell} \partial_{v} \partial_{r} \partial_{p} \partial_{k} \cP^{mn} \partial_{\ell} \cP^{pq} \partial_{q} \cP^{rs} \partial_{s} \cP^{tv} \partial_{i} f \partial_{m} g
-10 \partial_{t} \partial_{r} \partial_{p} \partial_{m} \cP^{ij} \partial_{v} \partial_{j} \cP^{k\ell} \partial_{\ell} \cP^{mn} \partial_{n} \cP^{pq} \partial_{q} \cP^{rs} \partial_{s} \cP^{tv} \partial_{i} f \partial_{k} g \\
-10 \partial_{t} \partial_{m} \cP^{ij} \partial_{v} \partial_{r} \partial_{p} \partial_{j} \cP^{k\ell} \partial_{\ell} \cP^{mn} \partial_{n} \cP^{pq} \partial_{q} \cP^{rs} \partial_{s} \cP^{tv} \partial_{i} f \partial_{k} g
-10 \partial_{t} \partial_{r} \partial_{p} \partial_{n} \cP^{ij} \partial_{j} \cP^{k\ell} \partial_{v} \partial_{k} \cP^{mn} \partial_{\ell} \cP^{pq} \partial_{q} \cP^{rs} \partial_{s} \cP^{tv} \partial_{i} f \partial_{m} g \\
-10 \partial_{t} \partial_{p} \cP^{ij} \partial_{v} \partial_{r} \partial_{q} \partial_{j} \cP^{k\ell} \partial_{\ell} \cP^{mn} \partial_{m} \cP^{pq} \partial_{n} \cP^{rs} \partial_{s} \cP^{tv} \partial_{i} f \partial_{k} g
+10 \partial_{t} \partial_{s} \partial_{p} \partial_{m} \cP^{ij} \partial_{j} \cP^{k\ell} \partial_{k} \cP^{mn} \partial_{\ell} \cP^{pq} \partial_{v} \partial_{n} \cP^{rs} \partial_{q} \cP^{tv} \partial_{i} f \partial_{r} g \\
+2 \partial_{t} \partial_{r} \partial_{p} \partial_{m} \partial_{k} \cP^{ij} \partial_{v} \cP^{k\ell} \partial_{\ell} \cP^{mn} \partial_{n} \cP^{pq} \partial_{q} \cP^{rs} \partial_{s} \cP^{tv} \partial_{i} f \partial_{j} g
-5 \partial_{p} \partial_{m} \partial_{k} \cP^{ij} \partial_{t} \partial_{r} \cP^{k\ell} \partial_{v} \partial_{\ell} \cP^{mn} \partial_{n} \cP^{pq} \partial_{q} \cP^{rs} \partial_{s} \cP^{tv} \partial_{i} f \partial_{j} g \\
+5 \partial_{t} \partial_{m} \partial_{k} \cP^{ij} \partial_{r} \partial_{p} \cP^{k\ell} \partial_{v} \partial_{\ell} \cP^{mn} \partial_{n} \cP^{pq} \partial_{q} \cP^{rs} \partial_{s} \cP^{tv} \partial_{i} f \partial_{j} g
-5 \partial_{p} \partial_{m} \partial_{k} \cP^{ij} \partial_{r} \cP^{k\ell} \partial_{t} \partial_{\ell} \cP^{mn} \partial_{v} \partial_{n} \cP^{pq} \partial_{q} \cP^{rs} \partial_{s} \cP^{tv} \partial_{i} f \partial_{j} g \\
-5 \partial_{p} \partial_{m} \partial_{k} \cP^{ij} \partial_{t} \cP^{k\ell} \partial_{r} \partial_{\ell} \cP^{mn} \partial_{v} \partial_{n} \cP^{pq} \partial_{q} \cP^{rs} \partial_{s} \cP^{tv} \partial_{i} f \partial_{j} g
+5 \partial_{t} \partial_{p} \cP^{ij} \partial_{v} \partial_{j} \cP^{k\ell} \partial_{\ell} \cP^{mn} \partial_{r} \partial_{m} \cP^{pq} \partial_{n} \cP^{rs} \partial_{s} \partial_{q} \cP^{tv} \partial_{i} f \partial_{k} g \\
+5 \partial_{v} \partial_{r} \cP^{ij} \partial_{j} \cP^{k\ell} \partial_{k} \cP^{mn} \partial_{m} \partial_{\ell} \cP^{pq} \partial_{p} \partial_{n} \cP^{rs} \partial_{s} \partial_{q} \cP^{tv} \partial_{i} f \partial_{t} g 
+5 \partial_{r} \partial_{m} \cP^{ij} \partial_{t} \partial_{j} \cP^{k\ell} \partial_{s} \partial_{\ell} \cP^{mn} \partial_{n} \cP^{pq} \partial_{v} \partial_{p} \cP^{rs} \partial_{q} \cP^{tv} \partial_{i} f \partial_{k} g \\
-5 \partial_{r} \partial_{n} \cP^{ij} \partial_{t} \partial_{j} \cP^{k\ell} \partial_{v} \partial_{k} \cP^{mn} \partial_{\ell} \cP^{pq} \partial_{p} \cP^{rs} \partial_{s} \partial_{q} \cP^{tv} \partial_{i} f \partial_{m} g
+5 \partial_{p} \partial_{m} \cP^{ij} \partial_{r} \partial_{j} \cP^{k\ell} \partial_{t} \partial_{\ell} \cP^{mn} \partial_{v} \partial_{n} \cP^{pq} \partial_{q} \cP^{rs} \partial_{s} \cP^{tv} \partial_{i} f \partial_{k} g \\
-5 \partial_{r} \partial_{n} \cP^{ij} \partial_{t} \partial_{j} \cP^{k\ell} \partial_{p} \partial_{k} \cP^{mn} \partial_{v} \partial_{\ell} \cP^{pq} \partial_{q} \cP^{rs} \partial_{s} \cP^{tv} \partial_{i} f \partial_{m} g
+5 \partial_{p} \partial_{m} \cP^{ij} \partial_{t} \partial_{j} \cP^{k\ell} \partial_{r} \partial_{\ell} \cP^{mn} \partial_{v} \partial_{n} \cP^{pq} \partial_{q} \cP^{rs} \partial_{s} \cP^{tv} \partial_{i} f \partial_{k} g \\
-5 \partial_{t} \partial_{n} \cP^{ij} \partial_{r} \partial_{j} \cP^{k\ell} \partial_{p} \partial_{k} \cP^{mn} \partial_{v} \partial_{\ell} \cP^{pq} \partial_{q} \cP^{rs} \partial_{s} \cP^{tv} \partial_{i} f \partial_{m} g
+5 \partial_{r} \partial_{n} \cP^{ij} \partial_{s} \partial_{j} \cP^{k\ell} \partial_{t} \partial_{k} \cP^{mn} \partial_{\ell} \cP^{pq} \partial_{v} \partial_{p} \cP^{rs} \partial_{q} \cP^{tv} \partial_{i} f \partial_{m} g \\
+5 \partial_{r} \partial_{m} \cP^{ij} \partial_{t} \partial_{j} \cP^{k\ell} \partial_{v} \partial_{\ell} \cP^{mn} \partial_{n} \cP^{pq} \partial_{p} \cP^{rs} \partial_{s} \partial_{q} \cP^{tv} \partial_{i} f \partial_{k} g
+5 \partial_{p} \partial_{n} \cP^{ij} \partial_{t} \partial_{j} \cP^{k\ell} \partial_{r} \partial_{k} \cP^{mn} \partial_{v} \partial_{\ell} \cP^{pq} \partial_{q} \cP^{rs} \partial_{s} \cP^{tv} \partial_{i} f \partial_{m} g \\
-5 \partial_{r} \partial_{m} \cP^{ij} \partial_{p} \partial_{j} \cP^{k\ell} \partial_{t} \partial_{\ell} \cP^{mn} \partial_{v} \partial_{n} \cP^{pq} \partial_{q} \cP^{rs} \partial_{s} \cP^{tv} \partial_{i} f \partial_{k} g
+5 \partial_{p} \partial_{n} \cP^{ij} \partial_{r} \partial_{j} \cP^{k\ell} \partial_{t} \partial_{k} \cP^{mn} \partial_{v} \partial_{\ell} \cP^{pq} \partial_{q} \cP^{rs} \partial_{s} \cP^{tv} \partial_{i} f \partial_{m} g \\
-5 \partial_{t} \partial_{m} \cP^{ij} \partial_{p} \partial_{j} \cP^{k\ell} \partial_{r} \partial_{\ell} \cP^{mn} \partial_{v} \partial_{n} \cP^{pq} \partial_{q} \cP^{rs} \partial_{s} \cP^{tv} \partial_{i} f \partial_{k} g
+5 \partial_{s} \partial_{m} \cP^{ij} \partial_{t} \partial_{j} \cP^{k\ell} \partial_{v} \partial_{k} \cP^{mn} \partial_{\ell} \cP^{pq} \partial_{p} \partial_{n} \cP^{rs} \partial_{q} \cP^{tv} \partial_{i} f \partial_{r} g \\
+5 \partial_{r} \partial_{p} \cP^{ij} \partial_{q} \partial_{j} \cP^{k\ell} \partial_{t} \partial_{\ell} \cP^{mn} \partial_{v} \partial_{m} \cP^{pq} \partial_{n} \cP^{rs} \partial_{s} \cP^{tv} \partial_{i} f \partial_{k} g
+5 \partial_{s} \partial_{m} \cP^{ij} \partial_{t} \partial_{j} \cP^{k\ell} \partial_{p} \partial_{k} \cP^{mn} \partial_{\ell} \cP^{pq} \partial_{v} \partial_{n} \cP^{rs} \partial_{q} \cP^{tv} \partial_{i} f \partial_{r} g \\
-5 \partial_{t} \partial_{p} \cP^{ij} \partial_{q} \partial_{j} \cP^{k\ell} \partial_{v} \partial_{\ell} \cP^{mn} \partial_{r} \partial_{m} \cP^{pq} \partial_{n} \cP^{rs} \partial_{s} \cP^{tv} \partial_{i} f \partial_{k} g
-5 \partial_{r} \partial_{n} \cP^{ij} \partial_{j} \cP^{k\ell} \partial_{t} \partial_{s} \partial_{k} \cP^{mn} \partial_{\ell} \cP^{pq} \partial_{v} \partial_{p} \cP^{rs} \partial_{q} \cP^{tv} \partial_{i} f \partial_{m} g \\
-5 \partial_{t} \partial_{r} \partial_{m} \cP^{ij} \partial_{s} \partial_{j} \cP^{k\ell} \partial_{\ell} \cP^{mn} \partial_{n} \cP^{pq} \partial_{v} \partial_{p} \cP^{rs} \partial_{q} \cP^{tv} \partial_{i} f \partial_{k} g
-5 \partial_{t} \partial_{p} \cP^{ij} \partial_{v} \partial_{q} \partial_{j} \cP^{k\ell} \partial_{\ell} \cP^{mn} \partial_{r} \partial_{m} \cP^{pq} \partial_{n} \cP^{rs} \partial_{s} \cP^{tv} \partial_{i} f \partial_{k} g \\
+5 \partial_{t} \partial_{s} \partial_{m} \cP^{ij} \partial_{j} \cP^{k\ell} \partial_{p} \partial_{k} \cP^{mn} \partial_{\ell} \cP^{pq} \partial_{v} \partial_{n} \cP^{rs} \partial_{q} \cP^{tv} \partial_{i} f \partial_{r} g
+5 \partial_{r} \partial_{m} \cP^{ij} \partial_{t} \partial_{s} \partial_{j} \cP^{k\ell} \partial_{v} \partial_{\ell} \cP^{mn} \partial_{n} \cP^{pq} \partial_{p} \cP^{rs} \partial_{q} \cP^{tv} \partial_{i} f \partial_{k} g \\
-5 \partial_{t} \partial_{r} \partial_{n} \cP^{ij} \partial_{s} \partial_{j} \cP^{k\ell} \partial_{v} \partial_{k} \cP^{mn} \partial_{\ell} \cP^{pq} \partial_{p} \cP^{rs} \partial_{q} \cP^{tv} \partial_{i} f \partial_{m} g
-5 \partial_{t} \partial_{m} \cP^{ij} \partial_{v} \partial_{p} \partial_{j} \cP^{k\ell} \partial_{r} \partial_{\ell} \cP^{mn} \partial_{n} \cP^{pq} \partial_{q} \cP^{rs} \partial_{s} \cP^{tv} \partial_{i} f \partial_{k} g \\
-5 \partial_{t} \partial_{p} \partial_{n} \cP^{ij} \partial_{r} \partial_{j} \cP^{k\ell} \partial_{v} \partial_{k} \cP^{mn} \partial_{\ell} \cP^{pq} \partial_{q} \cP^{rs} \partial_{s} \cP^{tv} \partial_{i} f \partial_{m} g
-5 \partial_{r} \partial_{m} \cP^{ij} \partial_{t} \partial_{s} \partial_{j} \cP^{k\ell} \partial_{\ell} \cP^{mn} \partial_{n} \cP^{pq} \partial_{v} \partial_{p} \cP^{rs} \partial_{q} \cP^{tv} \partial_{i} f \partial_{k} g \\
-5 \partial_{t} \partial_{r} \partial_{n} \cP^{ij} \partial_{j} \cP^{k\ell} \partial_{s} \partial_{k} \cP^{mn} \partial_{\ell} \cP^{pq} \partial_{v} \partial_{p} \cP^{rs} \partial_{q} \cP^{tv} \partial_{i} f \partial_{m} g
-5 \partial_{r} \partial_{n} \cP^{ij} \partial_{t} \partial_{j} \cP^{k\ell} \partial_{v} \partial_{s} \partial_{k} \cP^{mn} \partial_{\ell} \cP^{pq} \partial_{p} \cP^{rs} \partial_{q} \cP^{tv} \partial_{i} f \partial_{m} g \\
+5 \partial_{t} \partial_{r} \partial_{m} \cP^{ij} \partial_{s} \partial_{j} \cP^{k\ell} \partial_{v} \partial_{\ell} \cP^{mn} \partial_{n} \cP^{pq} \partial_{p} \cP^{rs} \partial_{q} \cP^{tv} \partial_{i} f \partial_{k} g
-5 \partial_{t} \partial_{n} \cP^{ij} \partial_{r} \partial_{j} \cP^{k\ell} \partial_{v} \partial_{p} \partial_{k} \cP^{mn} \partial_{\ell} \cP^{pq} \partial_{q} \cP^{rs} \partial_{s} \cP^{tv} \partial_{i} f \partial_{m} g \\
-5 \partial_{t} \partial_{p} \partial_{m} \cP^{ij} \partial_{v} \partial_{j} \cP^{k\ell} \partial_{r} \partial_{\ell} \cP^{mn} \partial_{n} \cP^{pq} \partial_{q} \cP^{rs} \partial_{s} \cP^{tv} \partial_{i} f \partial_{k} g
-5 \partial_{r} \partial_{m} \partial_{k} \cP^{ij} \partial_{t} \cP^{k\ell} \partial_{v} \partial_{\ell} \cP^{mn} \partial_{n} \cP^{pq} \partial_{p} \cP^{rs} \partial_{s} \partial_{q} \cP^{tv} \partial_{i} f \partial_{j} g \\
+5 \partial_{r} \partial_{m} \partial_{k} \cP^{ij} \partial_{p} \cP^{k\ell} \partial_{t} \partial_{\ell} \cP^{mn} \partial_{v} \partial_{n} \cP^{pq} \partial_{q} \cP^{rs} \partial_{s} \cP^{tv} \partial_{i} f \partial_{j} g
+5 \partial_{t} \partial_{m} \partial_{k} \cP^{ij} \partial_{p} \cP^{k\ell} \partial_{r} \partial_{\ell} \cP^{mn} \partial_{v} \partial_{n} \cP^{pq} \partial_{q} \cP^{rs} \partial_{s} \cP^{tv} \partial_{i} f \partial_{j} g \\
+5 \partial_{r} \partial_{m} \partial_{k} \cP^{ij} \partial_{t} \partial_{p} \cP^{k\ell} \partial_{\ell} \cP^{mn} \partial_{v} \partial_{n} \cP^{pq} \partial_{q} \cP^{rs} \partial_{s} \cP^{tv} \partial_{i} f \partial_{j} g
+5 \partial_{t} \partial_{m} \partial_{k} \cP^{ij} \partial_{r} \partial_{p} \cP^{k\ell} \partial_{\ell} \cP^{mn} \partial_{v} \partial_{n} \cP^{pq} \partial_{q} \cP^{rs} \partial_{s} \cP^{tv} \partial_{i} f \partial_{j} g \\
-5 \partial_{r} \partial_{n} \cP^{ij} \partial_{j} \cP^{k\ell} \partial_{t} \partial_{p} \partial_{k} \cP^{mn} \partial_{v} \partial_{\ell} \cP^{pq} \partial_{q} \cP^{rs} \partial_{s} \cP^{tv} \partial_{i} f \partial_{m} g 
+5 \partial_{t} \partial_{p} \partial_{m} \cP^{ij} \partial_{r} \partial_{j} \cP^{k\ell} \partial_{\ell} \cP^{mn} \partial_{v} \partial_{n} \cP^{pq} \partial_{q} \cP^{rs} \partial_{s} \cP^{tv} \partial_{i} f \partial_{k} g 
\end{align*}
\begin{align*}
-5 \partial_{t} \partial_{n} \cP^{ij} \partial_{j} \cP^{k\ell} \partial_{r} \partial_{p} \partial_{k} \cP^{mn} \partial_{v} \partial_{\ell} \cP^{pq} \partial_{q} \cP^{rs} \partial_{s} \cP^{tv} \partial_{i} f \partial_{m} g
+5 \partial_{r} \partial_{p} \partial_{m} \cP^{ij} \partial_{t} \partial_{j} \cP^{k\ell} \partial_{\ell} \cP^{mn} \partial_{v} \partial_{n} \cP^{pq} \partial_{q} \cP^{rs} \partial_{s} \cP^{tv} \partial_{i} f \partial_{k} g \\
-5 \partial_{t} \partial_{p} \cP^{ij} \partial_{v} \partial_{r} \partial_{j} \cP^{k\ell} \partial_{\ell} \cP^{mn} \partial_{m} \cP^{pq} \partial_{q} \partial_{n} \cP^{rs} \partial_{s} \cP^{tv} \partial_{i} f \partial_{k} g
-5 \partial_{v} \partial_{r} \partial_{m} \cP^{ij} \partial_{j} \cP^{k\ell} \partial_{p} \partial_{k} \cP^{mn} \partial_{\ell} \cP^{pq} \partial_{n} \cP^{rs} \partial_{s} \partial_{q} \cP^{tv} \partial_{i} f \partial_{t} g \\
-5 \partial_{r} \partial_{p} \cP^{ij} \partial_{t} \partial_{q} \partial_{j} \cP^{k\ell} \partial_{v} \partial_{\ell} \cP^{mn} \partial_{m} \cP^{pq} \partial_{n} \cP^{rs} \partial_{s} \cP^{tv} \partial_{i} f \partial_{k} g
-5 \partial_{t} \partial_{s} \partial_{m} \cP^{ij} \partial_{v} \partial_{j} \cP^{k\ell} \partial_{k} \cP^{mn} \partial_{\ell} \cP^{pq} \partial_{p} \partial_{n} \cP^{rs} \partial_{q} \cP^{tv} \partial_{i} f \partial_{r} g \\
-5 \partial_{t} \partial_{p} \cP^{ij} \partial_{r} \partial_{q} \partial_{j} \cP^{k\ell} \partial_{v} \partial_{\ell} \cP^{mn} \partial_{m} \cP^{pq} \partial_{n} \cP^{rs} \partial_{s} \cP^{tv} \partial_{i} f \partial_{k} g
+5 \partial_{s} \partial_{p} \partial_{m} \cP^{ij} \partial_{t} \partial_{j} \cP^{k\ell} \partial_{k} \cP^{mn} \partial_{\ell} \cP^{pq} \partial_{v} \partial_{n} \cP^{rs} \partial_{q} \cP^{tv} \partial_{i} f \partial_{r} g \\
-5 \partial_{r} \partial_{m} \cP^{ij} \partial_{t} \partial_{p} \partial_{j} \cP^{k\ell} \partial_{\ell} \cP^{mn} \partial_{v} \partial_{n} \cP^{pq} \partial_{q} \cP^{rs} \partial_{s} \cP^{tv} \partial_{i} f \partial_{k} g
+5 \partial_{t} \partial_{p} \partial_{n} \cP^{ij} \partial_{j} \cP^{k\ell} \partial_{r} \partial_{k} \cP^{mn} \partial_{v} \partial_{\ell} \cP^{pq} \partial_{q} \cP^{rs} \partial_{s} \cP^{tv} \partial_{i} f \partial_{m} g \\
-5 \partial_{t} \partial_{m} \cP^{ij} \partial_{r} \partial_{p} \partial_{j} \cP^{k\ell} \partial_{\ell} \cP^{mn} \partial_{v} \partial_{n} \cP^{pq} \partial_{q} \cP^{rs} \partial_{s} \cP^{tv} \partial_{i} f \partial_{k} g
+5 \partial_{r} \partial_{p} \partial_{n} \cP^{ij} \partial_{j} \cP^{k\ell} \partial_{t} \partial_{k} \cP^{mn} \partial_{v} \partial_{\ell} \cP^{pq} \partial_{q} \cP^{rs} \partial_{s} \cP^{tv} \partial_{i} f \partial_{m} g \\
-5 \partial_{t} \partial_{s} \cP^{ij} \partial_{j} \cP^{k\ell} \partial_{k} \cP^{mn} \partial_{m} \partial_{\ell} \cP^{pq} \partial_{v} \partial_{p} \partial_{n} \cP^{rs} \partial_{q} \cP^{tv} \partial_{i} f \partial_{r} g
+5 \partial_{t} \partial_{r} \partial_{p} \cP^{ij} \partial_{v} \partial_{j} \cP^{k\ell} \partial_{\ell} \cP^{mn} \partial_{m} \cP^{pq} \partial_{q} \partial_{n} \cP^{rs} \partial_{s} \cP^{tv} \partial_{i} f \partial_{k} g \\
-5 \partial_{r} \partial_{p} \partial_{m} \partial_{k} \cP^{ij} \partial_{t} \cP^{k\ell} \partial_{v} \partial_{\ell} \cP^{mn} \partial_{n} \cP^{pq} \partial_{q} \cP^{rs} \partial_{s} \cP^{tv} \partial_{i} f \partial_{j} g
-5 \partial_{t} \partial_{p} \partial_{m} \partial_{k} \cP^{ij} \partial_{r} \cP^{k\ell} \partial_{\ell} \cP^{mn} \partial_{v} \partial_{n} \cP^{pq} \partial_{q} \cP^{rs} \partial_{s} \cP^{tv} \partial_{i} f \partial_{j} g \\
-5 \partial_{r} \partial_{p} \partial_{m} \partial_{k} \cP^{ij} \partial_{t} \cP^{k\ell} \partial_{\ell} \cP^{mn} \partial_{v} \partial_{n} \cP^{pq} \partial_{q} \cP^{rs} \partial_{s} \cP^{tv} \partial_{i} f \partial_{j} g
+5 \partial_{s} \partial_{m} \cP^{ij} \partial_{j} \cP^{k\ell} \partial_{t} \partial_{k} \cP^{mn} \partial_{\ell} \cP^{pq} \partial_{v} \partial_{p} \partial_{n} \cP^{rs} \partial_{q} \cP^{tv} \partial_{i} f \partial_{r} g \\
-5 \partial_{t} \partial_{r} \partial_{p} \cP^{ij} \partial_{q} \partial_{j} \cP^{k\ell} \partial_{\ell} \cP^{mn} \partial_{v} \partial_{m} \cP^{pq} \partial_{n} \cP^{rs} \partial_{s} \cP^{tv} \partial_{i} f \partial_{k} g
-5 \partial_{t} \partial_{n} \cP^{ij} \partial_{v} \partial_{j} \cP^{k\ell} \partial_{r} \partial_{p} \partial_{k} \cP^{mn} \partial_{\ell} \cP^{pq} \partial_{q} \cP^{rs} \partial_{s} \cP^{tv} \partial_{i} f \partial_{m} g \\
+5 \partial_{r} \partial_{p} \partial_{m} \cP^{ij} \partial_{t} \partial_{j} \cP^{k\ell} \partial_{v} \partial_{\ell} \cP^{mn} \partial_{n} \cP^{pq} \partial_{q} \cP^{rs} \partial_{s} \cP^{tv} \partial_{i} f \partial_{k} g
-5 \partial_{p} \partial_{n} \cP^{ij} \partial_{t} \partial_{j} \cP^{k\ell} \partial_{v} \partial_{r} \partial_{k} \cP^{mn} \partial_{\ell} \cP^{pq} \partial_{q} \cP^{rs} \partial_{s} \cP^{tv} \partial_{i} f \partial_{m} g \\
-5 \partial_{t} \partial_{r} \partial_{m} \cP^{ij} \partial_{p} \partial_{j} \cP^{k\ell} \partial_{v} \partial_{\ell} \cP^{mn} \partial_{n} \cP^{pq} \partial_{q} \cP^{rs} \partial_{s} \cP^{tv} \partial_{i} f \partial_{k} g
-5 \partial_{t} \partial_{p} \cP^{ij} \partial_{r} \partial_{q} \partial_{j} \cP^{k\ell} \partial_{\ell} \cP^{mn} \partial_{v} \partial_{m} \cP^{pq} \partial_{n} \cP^{rs} \partial_{s} \cP^{tv} \partial_{i} f \partial_{k} g \\
+5 \partial_{s} \partial_{p} \partial_{m} \cP^{ij} \partial_{j} \cP^{k\ell} \partial_{t} \partial_{k} \cP^{mn} \partial_{\ell} \cP^{pq} \partial_{v} \partial_{n} \cP^{rs} \partial_{q} \cP^{tv} \partial_{i} f \partial_{r} g
-5 \partial_{t} \partial_{p} \cP^{ij} \partial_{v} \partial_{r} \partial_{j} \cP^{k\ell} \partial_{\ell} \cP^{mn} \partial_{m} \cP^{pq} \partial_{n} \cP^{rs} \partial_{s} \partial_{q} \cP^{tv} \partial_{i} f \partial_{k} g \\
-5 \partial_{v} \partial_{r} \partial_{m} \cP^{ij} \partial_{j} \cP^{k\ell} \partial_{k} \cP^{mn} \partial_{\ell} \cP^{pq} \partial_{p} \partial_{n} \cP^{rs} \partial_{s} \partial_{q} \cP^{tv} \partial_{i} f \partial_{t} g
-5 \partial_{t} \partial_{m} \cP^{ij} \partial_{r} \partial_{p} \partial_{j} \cP^{k\ell} \partial_{v} \partial_{\ell} \cP^{mn} \partial_{n} \cP^{pq} \partial_{q} \cP^{rs} \partial_{s} \cP^{tv} \partial_{i} f \partial_{k} g \\
-5 \partial_{r} \partial_{p} \partial_{n} \cP^{ij} \partial_{t} \partial_{j} \cP^{k\ell} \partial_{v} \partial_{k} \cP^{mn} \partial_{\ell} \cP^{pq} \partial_{q} \cP^{rs} \partial_{s} \cP^{tv} \partial_{i} f \partial_{m} g
+5 \partial_{p} \partial_{m} \cP^{ij} \partial_{t} \partial_{r} \partial_{j} \cP^{k\ell} \partial_{v} \partial_{\ell} \cP^{mn} \partial_{n} \cP^{pq} \partial_{q} \cP^{rs} \partial_{s} \cP^{tv} \partial_{i} f \partial_{k} g \\
-5 \partial_{t} \partial_{r} \partial_{n} \cP^{ij} \partial_{v} \partial_{j} \cP^{k\ell} \partial_{p} \partial_{k} \cP^{mn} \partial_{\ell} \cP^{pq} \partial_{q} \cP^{rs} \partial_{s} \cP^{tv} \partial_{i} f \partial_{m} g
-5 \partial_{t} \partial_{p} \cP^{ij} \partial_{j} \cP^{k\ell} \partial_{v} \partial_{k} \cP^{mn} \partial_{r} \partial_{\ell} \cP^{pq} \partial_{n} \cP^{rs} \partial_{s} \partial_{q} \cP^{tv} \partial_{i} f \partial_{m} g \\
-5 \partial_{t} \partial_{m} \cP^{ij} \partial_{j} \cP^{k\ell} \partial_{p} \partial_{k} \cP^{mn} \partial_{v} \partial_{\ell} \cP^{pq} \partial_{q} \partial_{n} \cP^{rs} \partial_{s} \cP^{tv} \partial_{i} f \partial_{r} g
+5 \partial_{r} \partial_{p} \cP^{ij} \partial_{j} \cP^{k\ell} \partial_{t} \partial_{k} \cP^{mn} \partial_{s} \partial_{\ell} \cP^{pq} \partial_{v} \partial_{n} \cP^{rs} \partial_{q} \cP^{tv} \partial_{i} f \partial_{m} g \\
-5 \partial_{t} \partial_{r} \cP^{ij} \partial_{v} \partial_{j} \cP^{k\ell} \partial_{p} \partial_{k} \cP^{mn} \partial_{s} \partial_{\ell} \cP^{pq} \partial_{n} \cP^{rs} \partial_{q} \cP^{tv} \partial_{i} f \partial_{m} g 
+5 \partial_{r} \partial_{p} \cP^{ij} \partial_{j} \cP^{k\ell} \partial_{t} \partial_{k} \cP^{mn} \partial_{v} \partial_{\ell} \cP^{pq} \partial_{q} \partial_{n} \cP^{rs} \partial_{s} \cP^{tv} \partial_{i} f \partial_{m} g \\
+5 \partial_{r} \partial_{p} \cP^{ij} \partial_{t} \partial_{j} \cP^{k\ell} \partial_{v} \partial_{k} \cP^{mn} \partial_{\ell} \cP^{pq} \partial_{n} \cP^{rs} \partial_{s} \partial_{q} \cP^{tv} \partial_{i} f \partial_{m} g
-5 \partial_{t} \partial_{m} \cP^{ij} \partial_{v} \partial_{j} \cP^{k\ell} \partial_{k} \cP^{mn} \partial_{\ell} \cP^{pq} \partial_{p} \partial_{n} \cP^{rs} \partial_{s} \partial_{q} \cP^{tv} \partial_{i} f \partial_{r} g \\
+5 \partial_{t} \partial_{r} \cP^{ij} \partial_{s} \partial_{j} \cP^{k\ell} \partial_{v} \partial_{k} \cP^{mn} \partial_{n} \partial_{\ell} \cP^{pq} \partial_{p} \cP^{rs} \partial_{q} \cP^{tv} \partial_{i} f \partial_{m} g
-5 \partial_{t} \partial_{m} \cP^{ij} \partial_{v} \partial_{j} \cP^{k\ell} \partial_{p} \partial_{k} \cP^{mn} \partial_{\ell} \cP^{pq} \partial_{q} \partial_{n} \cP^{rs} \partial_{s} \cP^{tv} \partial_{i} f \partial_{r} g \\
+5 \partial_{t} \partial_{r} \cP^{ij} \partial_{j} \cP^{k\ell} \partial_{s} \partial_{k} \cP^{mn} \partial_{n} \partial_{\ell} \cP^{pq} \partial_{v} \partial_{p} \cP^{rs} \partial_{q} \cP^{tv} \partial_{i} f \partial_{m} g
+5 \partial_{p} \partial_{m} \cP^{ij} \partial_{t} \partial_{j} \cP^{k\ell} \partial_{k} \cP^{mn} \partial_{s} \partial_{\ell} \cP^{pq} \partial_{v} \partial_{n} \cP^{rs} \partial_{q} \cP^{tv} \partial_{i} f \partial_{r} g \\
+5 \partial_{t} \partial_{p} \cP^{ij} \partial_{r} \partial_{j} \cP^{k\ell} \partial_{v} \partial_{k} \cP^{mn} \partial_{\ell} \cP^{pq} \partial_{q} \partial_{n} \cP^{rs} \partial_{s} \cP^{tv} \partial_{i} f \partial_{m} g
-5 \partial_{p} \partial_{m} \cP^{ij} \partial_{t} \partial_{j} \cP^{k\ell} \partial_{v} \partial_{k} \cP^{mn} \partial_{\ell} \cP^{pq} \partial_{q} \partial_{n} \cP^{rs} \partial_{s} \cP^{tv} \partial_{i} f \partial_{r} g \\
-5 \partial_{t} \partial_{m} \cP^{ij} \partial_{j} \cP^{k\ell} \partial_{p} \partial_{k} \cP^{mn} \partial_{s} \partial_{\ell} \cP^{pq} \partial_{v} \partial_{n} \cP^{rs} \partial_{q} \cP^{tv} \partial_{i} f \partial_{r} g
+5 \partial_{r} \partial_{p} \cP^{ij} \partial_{t} \partial_{j} \cP^{k\ell} \partial_{v} \partial_{k} \cP^{mn} \partial_{s} \partial_{\ell} \cP^{pq} \partial_{n} \cP^{rs} \partial_{q} \cP^{tv} \partial_{i} f \partial_{m} g \\
+5 \partial_{t} \partial_{r} \cP^{ij} \partial_{j} \cP^{k\ell} \partial_{p} \partial_{k} \cP^{mn} \partial_{s} \partial_{\ell} \cP^{pq} \partial_{v} \partial_{n} \cP^{rs} \partial_{q} \cP^{tv} \partial_{i} f \partial_{m} g
-5 \partial_{t} \partial_{r} \cP^{ij} \partial_{v} \partial_{j} \cP^{k\ell} \partial_{p} \partial_{k} \cP^{mn} \partial_{\ell} \cP^{pq} \partial_{n} \cP^{rs} \partial_{s} \partial_{q} \cP^{tv} \partial_{i} f \partial_{m} g \\
+5 \partial_{t} \partial_{r} \cP^{ij} \partial_{j} \cP^{k\ell} \partial_{p} \partial_{k} \cP^{mn} \partial_{v} \partial_{\ell} \cP^{pq} \partial_{q} \partial_{n} \cP^{rs} \partial_{s} \cP^{tv} \partial_{i} f \partial_{m} g
-5 \partial_{t} \partial_{p} \cP^{ij} \partial_{r} \partial_{j} \cP^{k\ell} \partial_{v} \partial_{k} \cP^{mn} \partial_{s} \partial_{\ell} \cP^{pq} \partial_{n} \cP^{rs} \partial_{q} \cP^{tv} \partial_{i} f \partial_{m} g \\
-5 \partial_{t} \partial_{m} \cP^{ij} \partial_{j} \cP^{k\ell} \partial_{s} \partial_{k} \cP^{mn} \partial_{n} \partial_{\ell} \cP^{pq} \partial_{v} \partial_{p} \cP^{rs} \partial_{q} \cP^{tv} \partial_{i} f \partial_{r} g
-5 \partial_{r} \partial_{m} \cP^{ij} \partial_{v} \partial_{j} \cP^{k\ell} \partial_{p} \partial_{k} \cP^{mn} \partial_{\ell} \cP^{pq} \partial_{n} \cP^{rs} \partial_{s} \partial_{q} \cP^{tv} \partial_{i} f \partial_{t} g \\
-5 \partial_{t} \partial_{m} \cP^{ij} \partial_{s} \partial_{j} \cP^{k\ell} \partial_{k} \cP^{mn} \partial_{n} \partial_{\ell} \cP^{pq} \partial_{v} \partial_{p} \cP^{rs} \partial_{q} \cP^{tv} \partial_{i} f \partial_{r} g
+5 \partial_{t} \partial_{m} \cP^{ij} \partial_{j} \cP^{k\ell} \partial_{v} \partial_{k} \cP^{mn} \partial_{\ell} \cP^{pq} \partial_{p} \partial_{n} \cP^{rs} \partial_{s} \partial_{q} \cP^{tv} \partial_{i} f \partial_{r} g \\
+5 \partial_{t} \partial_{p} \cP^{ij} \partial_{v} \partial_{j} \cP^{k\ell} \partial_{q} \partial_{k} \cP^{mn} \partial_{r} \partial_{\ell} \cP^{pq} \partial_{n} \cP^{rs} \partial_{s} \cP^{tv} \partial_{i} f \partial_{m} g
-5 \partial_{r} \partial_{m} \cP^{ij} \partial_{j} \cP^{k\ell} \partial_{v} \partial_{k} \cP^{mn} \partial_{\ell} \cP^{pq} \partial_{p} \partial_{n} \cP^{rs} \partial_{s} \partial_{q} \cP^{tv} \partial_{i} f \partial_{t} g \\
+5 \partial_{t} \partial_{p} \cP^{ij} \partial_{r} \partial_{j} \cP^{k\ell} \partial_{q} \partial_{k} \cP^{mn} \partial_{v} \partial_{\ell} \cP^{pq} \partial_{n} \cP^{rs} \partial_{s} \cP^{tv} \partial_{i} f \partial_{m} g
+5 \partial_{p} \partial_{m} \cP^{ij} \partial_{s} \partial_{j} \cP^{k\ell} \partial_{t} \partial_{k} \cP^{mn} \partial_{\ell} \cP^{pq} \partial_{v} \partial_{n} \cP^{rs} \partial_{q} \cP^{tv} \partial_{i} f \partial_{r} g \\
-5 \partial_{t} \partial_{m} \cP^{ij} \partial_{s} \partial_{j} \cP^{k\ell} \partial_{v} \partial_{k} \cP^{mn} \partial_{\ell} \cP^{pq} \partial_{p} \partial_{n} \cP^{rs} \partial_{q} \cP^{tv} \partial_{i} f \partial_{r} g
-5 \partial_{r} \partial_{p} \cP^{ij} \partial_{s} \partial_{j} \cP^{k\ell} \partial_{t} \partial_{k} \cP^{mn} \partial_{\ell} \cP^{pq} \partial_{v} \partial_{n} \cP^{rs} \partial_{q} \cP^{tv} \partial_{i} f \partial_{m} g \\
+5 \partial_{t} \partial_{p} \cP^{ij} \partial_{v} \partial_{j} \cP^{k\ell} \partial_{r} \partial_{k} \cP^{mn} \partial_{n} \partial_{\ell} \cP^{pq} \partial_{q} \cP^{rs} \partial_{s} \cP^{tv} \partial_{i} f \partial_{m} g
-5 \partial_{r} \partial_{p} \cP^{ij} \partial_{t} \partial_{j} \cP^{k\ell} \partial_{v} \partial_{k} \cP^{mn} \partial_{n} \partial_{\ell} \cP^{pq} \partial_{q} \cP^{rs} \partial_{s} \cP^{tv} \partial_{i} f \partial_{m} g \\
-5 \partial_{t} \partial_{r} \cP^{ij} \partial_{s} \partial_{j} \cP^{k\ell} \partial_{p} \partial_{k} \cP^{mn} \partial_{\ell} \cP^{pq} \partial_{v} \partial_{n} \cP^{rs} \partial_{q} \cP^{tv} \partial_{i} f \partial_{m} g
-5 \partial_{r} \partial_{p} \cP^{ij} \partial_{j} \cP^{k\ell} \partial_{t} \partial_{q} \partial_{k} \cP^{mn} \partial_{v} \partial_{\ell} \cP^{pq} \partial_{n} \cP^{rs} \partial_{s} \cP^{tv} \partial_{i} f \partial_{m} g \\
-5 \partial_{t} \partial_{r} \partial_{p} \cP^{ij} \partial_{j} \cP^{k\ell} \partial_{v} \partial_{k} \cP^{mn} \partial_{\ell} \cP^{pq} \partial_{n} \cP^{rs} \partial_{s} \partial_{q} \cP^{tv} \partial_{i} f \partial_{m} g
+5 \partial_{t} \partial_{p} \cP^{ij} \partial_{j} \cP^{k\ell} \partial_{v} \partial_{q} \partial_{k} \cP^{mn} \partial_{r} \partial_{\ell} \cP^{pq} \partial_{n} \cP^{rs} \partial_{s} \cP^{tv} \partial_{i} f \partial_{m} g \\
-5 \partial_{t} \partial_{p} \partial_{m} \cP^{ij} \partial_{j} \cP^{k\ell} \partial_{v} \partial_{k} \cP^{mn} \partial_{\ell} \cP^{pq} \partial_{q} \partial_{n} \cP^{rs} \partial_{s} \cP^{tv} \partial_{i} f \partial_{r} g
+5 \partial_{t} \partial_{p} \cP^{ij} \partial_{j} \cP^{k\ell} \partial_{v} \partial_{r} \partial_{k} \cP^{mn} \partial_{n} \partial_{\ell} \cP^{pq} \partial_{q} \cP^{rs} \partial_{s} \cP^{tv} \partial_{i} f \partial_{m} g \\
+5 \partial_{t} \partial_{r} \partial_{p} \cP^{ij} \partial_{s} \partial_{j} \cP^{k\ell} \partial_{v} \partial_{k} \cP^{mn} \partial_{\ell} \cP^{pq} \partial_{n} \cP^{rs} \partial_{q} \cP^{tv} \partial_{i} f \partial_{m} g 
+5 \partial_{r} \partial_{p} \cP^{ij} \partial_{t} \partial_{j} \cP^{k\ell} \partial_{v} \partial_{q} \partial_{k} \cP^{mn} \partial_{\ell} \cP^{pq} \partial_{n} \cP^{rs} \partial_{s} \cP^{tv} \partial_{i} f \partial_{m} g \\
-5 \partial_{t} \partial_{r} \partial_{p} \cP^{ij} \partial_{j} \cP^{k\ell} \partial_{v} \partial_{k} \cP^{mn} \partial_{\ell} \cP^{pq} \partial_{q} \partial_{n} \cP^{rs} \partial_{s} \cP^{tv} \partial_{i} f \partial_{m} g
+5 \partial_{t} \partial_{p} \cP^{ij} \partial_{r} \partial_{j} \cP^{k\ell} \partial_{v} \partial_{q} \partial_{k} \cP^{mn} \partial_{\ell} \cP^{pq} \partial_{n} \cP^{rs} \partial_{s} \cP^{tv} \partial_{i} f \partial_{m} g \\
+5 \partial_{t} \partial_{p} \partial_{m} \cP^{ij} \partial_{j} \cP^{k\ell} \partial_{k} \cP^{mn} \partial_{s} \partial_{\ell} \cP^{pq} \partial_{v} \partial_{n} \cP^{rs} \partial_{q} \cP^{tv} \partial_{i} f \partial_{r} g
-5 \partial_{t} \partial_{m} \cP^{ij} \partial_{s} \partial_{j} \cP^{k\ell} \partial_{k} \cP^{mn} \partial_{\ell} \cP^{pq} \partial_{v} \partial_{p} \partial_{n} \cP^{rs} \partial_{q} \cP^{tv} \partial_{i} f \partial_{r} g \\
+5 \partial_{t} \partial_{p} \partial_{m} \cP^{ij} \partial_{s} \partial_{j} \cP^{k\ell} \partial_{k} \cP^{mn} \partial_{\ell} \cP^{pq} \partial_{v} \partial_{n} \cP^{rs} \partial_{q} \cP^{tv} \partial_{i} f \partial_{r} g
-5 \partial_{t} \partial_{r} \cP^{ij} \partial_{s} \partial_{j} \cP^{k\ell} \partial_{v} \partial_{p} \partial_{k} \cP^{mn} \partial_{\ell} \cP^{pq} \partial_{n} \cP^{rs} \partial_{q} \cP^{tv} \partial_{i} f \partial_{m} g \\
-5 \partial_{t} \partial_{r} \partial_{p} \cP^{ij} \partial_{j} \cP^{k\ell} \partial_{v} \partial_{k} \cP^{mn} \partial_{n} \partial_{\ell} \cP^{pq} \partial_{q} \cP^{rs} \partial_{s} \cP^{tv} \partial_{i} f \partial_{m} g
+5 \partial_{t} \partial_{r} \cP^{ij} \partial_{j} \cP^{k\ell} \partial_{v} \partial_{p} \partial_{k} \cP^{mn} \partial_{\ell} \cP^{pq} \partial_{n} \cP^{rs} \partial_{s} \partial_{q} \cP^{tv} \partial_{i} f \partial_{m} g \\
+5 \partial_{t} \partial_{r} \partial_{p} \cP^{ij} \partial_{j} \cP^{k\ell} \partial_{q} \partial_{k} \cP^{mn} \partial_{v} \partial_{\ell} \cP^{pq} \partial_{n} \cP^{rs} \partial_{s} \cP^{tv} \partial_{i} f \partial_{m} g
+5 \partial_{t} \partial_{r} \cP^{ij} \partial_{j} \cP^{k\ell} \partial_{v} \partial_{p} \partial_{k} \cP^{mn} \partial_{\ell} \cP^{pq} \partial_{q} \partial_{n} \cP^{rs} \partial_{s} \cP^{tv} \partial_{i} f \partial_{m} g \\
+5 \partial_{t} \partial_{r} \partial_{p} \cP^{ij} \partial_{v} \partial_{j} \cP^{k\ell} \partial_{q} \partial_{k} \cP^{mn} \partial_{\ell} \cP^{pq} \partial_{n} \cP^{rs} \partial_{s} \cP^{tv} \partial_{i} f \partial_{m} g
+5 \partial_{v} \partial_{m} \cP^{ij} \partial_{j} \cP^{k\ell} \partial_{p} \partial_{k} \cP^{mn} \partial_{r} \partial_{\ell} \cP^{pq} \partial_{n} \cP^{rs} \partial_{s} \partial_{q} \cP^{tv} \partial_{i} f \partial_{t} g \\
+5 \partial_{t} \partial_{p} \cP^{ij} \partial_{r} \partial_{j} \cP^{k\ell} \partial_{\ell} \cP^{mn} \partial_{v} \partial_{m} \cP^{pq} \partial_{q} \partial_{n} \cP^{rs} \partial_{s} \cP^{tv} \partial_{i} f \partial_{k} g
-5 \partial_{s} \partial_{m} \cP^{ij} \partial_{j} \cP^{k\ell} \partial_{t} \partial_{k} \cP^{mn} \partial_{n} \partial_{\ell} \cP^{pq} \partial_{v} \partial_{p} \cP^{rs} \partial_{q} \cP^{tv} \partial_{i} f \partial_{r} g 
\end{align*}
\begin{align*}
+5 \partial_{t} \partial_{p} \cP^{ij} \partial_{r} \partial_{j} \cP^{k\ell} \partial_{\ell} \cP^{mn} \partial_{s} \partial_{m} \cP^{pq} \partial_{v} \partial_{n} \cP^{rs} \partial_{q} \cP^{tv} \partial_{i} f \partial_{k} g
-5 \partial_{s} \partial_{m} \cP^{ij} \partial_{t} \partial_{j} \cP^{k\ell} \partial_{k} \cP^{mn} \partial_{n} \partial_{\ell} \cP^{pq} \partial_{v} \partial_{p} \cP^{rs} \partial_{q} \cP^{tv} \partial_{i} f \partial_{r} g \\
+5 \partial_{t} \partial_{p} \cP^{ij} \partial_{r} \partial_{j} \cP^{k\ell} \partial_{v} \partial_{\ell} \cP^{mn} \partial_{s} \partial_{m} \cP^{pq} \partial_{n} \cP^{rs} \partial_{q} \cP^{tv} \partial_{i} f \partial_{k} g
+5 \partial_{v} \partial_{m} \cP^{ij} \partial_{r} \partial_{j} \cP^{k\ell} \partial_{k} \cP^{mn} \partial_{\ell} \cP^{pq} \partial_{p} \partial_{n} \cP^{rs} \partial_{s} \partial_{q} \cP^{tv} \partial_{i} f \partial_{t} g \\
+5 \partial_{t} \partial_{p} \cP^{ij} \partial_{r} \partial_{j} \cP^{k\ell} \partial_{v} \partial_{\ell} \cP^{mn} \partial_{m} \cP^{pq} \partial_{n} \cP^{rs} \partial_{s} \partial_{q} \cP^{tv} \partial_{i} f \partial_{k} g
+5 \partial_{t} \partial_{s} \cP^{ij} \partial_{v} \partial_{j} \cP^{k\ell} \partial_{k} \cP^{mn} \partial_{m} \partial_{\ell} \cP^{pq} \partial_{p} \partial_{n} \cP^{rs} \partial_{q} \cP^{tv} \partial_{i} f \partial_{r} g \\
+5 \partial_{r} \partial_{p} \cP^{ij} \partial_{t} \partial_{j} \cP^{k\ell} \partial_{v} \partial_{\ell} \cP^{mn} \partial_{m} \cP^{pq} \partial_{q} \partial_{n} \cP^{rs} \partial_{s} \cP^{tv} \partial_{i} f \partial_{k} g
-5 \partial_{t} \partial_{p} \partial_{n} \cP^{ij} \partial_{j} \cP^{k\ell} \partial_{v} \partial_{r} \partial_{k} \cP^{mn} \partial_{\ell} \cP^{pq} \partial_{q} \cP^{rs} \partial_{s} \cP^{tv} \partial_{i} f \partial_{m} g \\
-5 \partial_{t} \partial_{r} \partial_{m} \cP^{ij} \partial_{v} \partial_{p} \partial_{j} \cP^{k\ell} \partial_{\ell} \cP^{mn} \partial_{n} \cP^{pq} \partial_{q} \cP^{rs} \partial_{s} \cP^{tv} \partial_{i} f \partial_{k} g
+5 \partial_{t} \partial_{s} \partial_{m} \cP^{ij} \partial_{j} \cP^{k\ell} \partial_{k} \cP^{mn} \partial_{\ell} \cP^{pq} \partial_{v} \partial_{p} \partial_{n} \cP^{rs} \partial_{q} \cP^{tv} \partial_{i} f \partial_{r} g \\
-5 \partial_{t} \partial_{r} \partial_{p} \cP^{ij} \partial_{v} \partial_{q} \partial_{j} \cP^{k\ell} \partial_{\ell} \cP^{mn} \partial_{m} \cP^{pq} \partial_{n} \cP^{rs} \partial_{s} \cP^{tv} \partial_{i} f \partial_{k} g
-5 \partial_{t} \partial_{r} \partial_{n} \cP^{ij} \partial_{j} \cP^{k\ell} \partial_{v} \partial_{p} \partial_{k} \cP^{mn} \partial_{\ell} \cP^{pq} \partial_{q} \cP^{rs} \partial_{s} \cP^{tv} \partial_{i} f \partial_{m} g \\
-5 \partial_{t} \partial_{p} \partial_{m} \cP^{ij} \partial_{v} \partial_{r} \partial_{j} \cP^{k\ell} \partial_{\ell} \cP^{mn} \partial_{n} \cP^{pq} \partial_{q} \cP^{rs} \partial_{s} \cP^{tv} \partial_{i} f \partial_{k} g\lefteqn{.}
\end{align*}

}

{\footnotesize

}

\end{document}